\theoremstyle{plain}
\newtheorem{theorem}{Theorem}
\newtheorem{lemma}{Lemma}
\newtheorem{remark}{Remark}
\newtheorem{remarks}{Remarks}
\newtheorem{algorithm}{Algorithm}
\theoremstyle{definition}
\theoremstyle{remark}
\newcommand {\Algo}[5]
            {{\goodbreak\topsep0pt\partopsep0pt
            \begin{algorithm} \label{#1}
                {\em (#2)} 
                \begin{list}{}{\leftmargin2.0cm\def\makelabel##1{{\em \underline{##1}:\hfill}}
                \labelwidth1.8cm\labelsep.2cm\itemsep.2cm} 
\item[Input] #3
\item[Output] #4
\vspace{6pt}
#5 \end{list}
\end{algorithm}
            \goodbreak}}
\newcommand {\Q}{{\mathbb{Q}}}
\newcommand {\Z}{{\mathbb{Z}}}
\newcommand {\F}{{\mathbb{F}}}
\newcommand {\OO}{{\mathcal{O}}}
\newcommand {\idp}{{\mathfrak{p}}}
\newcommand{\Rank}      {\mathop{\rm {Rank}}}
\newcommand{\Cl}      {\mathop{\rm {Cl}}}
\newcommand{\rk}      {\mathop{\rm {rk}}}
\newcommand{\jac}[2] {{\left(\frac{#1}{#2}\right)}}
\newcommand{\eps}{\varepsilon}
\newcommand{\exponent}{{c}}
\begin{document}

\title[Imaginary quadratic number fields with small exponent]{Imaginary quadratic number fields with class groups of small exponent}
\author{Andreas-Stephan Elsenhans}
\address{Universit\"at Paderborn,
Fakult\"at EIM,
Institut f\"ur Mathematik,
Warburger Str.~100,
33098~Paderborn,
Deutschland }
\email{elsenhan@math.uni-paderborn.de}

\author{J\"urgen Kl\"uners}
\address{Universit\"at Paderborn,
Fakult\"at EIM,
Institut f\"ur Mathematik,
Warburger Str.~100,
33098~Paderborn,
Deutschland }
\email{klueners@math.uni-paderborn.de}
\author{Florin Nicolae}
\address{``Simion Stoilow'' Institute of Mathematics of the
Romanian Academy, P.O.BOX 1-764,RO-014700 Bucharest, Romania}
\email{florin.nicolae@imar.ro}

\begin{abstract}
  Let $D<0$ be a fundamental discriminant and denote by $E(D)$
  the exponent of the ideal class group $\Cl(D)$ of $K=\Q(\sqrt{D})$. Under the assumption that no Siegel zeros exist we compute all such $D$ with $E(D)$ is a divisor of $8$. We compute
  all $D$ with $|D|\leq 3.1\cdot 10^{20}$ such that $E(D)\leq 8$.
  

\end{abstract}

\maketitle

\section{Introduction}

Let $D$ be a fundamental discriminant, i.e. the discriminant of
a quadratic number field. For $D<0$ let $E(D)$ be the exponent of the ideal 
class group $\Cl(D)$ of the imaginary quadratic field $K=\Q(\sqrt{D})$. 
Under the Extended Riemann Hypothesis it is known (see \cite{BoKi}, \cite{We}) 
that $E(D)\gg \frac{\log |D|}{\log\log |D|}$. Without any unproved hypothesis 
it is not even known that $E(D)\to\infty$. In \cite{We}, Theorem 1, it is shown 
that there is at most one imaginary quadratic field with $|D|>5460$ and $E(D)=2$. 
In \cite{BoKi}, \cite{We} it is shown (ineffectively) that there are finitely 
many imaginary quadratic fields with $E(D)=3$. In \cite{HB}, Theorem 2, it is 
observed that for given $r\geq 0$ there are finitely many imaginary quadratic 
fields with $E(D)=2^r $ or $E(D)=3\cdot 2^r$. (See also \cite{EK}.) 
Finally, there are finitely many imaginary quadratic fields with $E(D)=5$ (\cite{HB}, Theorem 1).

In this note we are interested to determine all $D$ such that the
class group has exponent at most 8, i.e. $\Cl(D)^\exponent$ is the trivial
group for some $\exponent \leq 8$. In other words we want that the class 
group is of type 
$C_2^r \times C_3^s, C_5^r, C_7^r$, or 
$C_2^r\times C_4^s \times C_8^t$, 
where $C_i$ denotes the cyclic group of order $i$. 
For any given $r,s$ it is known that there are infinitely many
$D<0$ such that $C_2^r\times C_4^s$ is a subgroup of $\Cl(D)$.  

Our computations show the following:
\begin{theorem}\label{th1}
There are exactly 1555 
imaginary quadratic fields with discriminant $|D| \leq 3.1 \cdot 10^{20}$ 
and class group of exponent $\leq 8$.

\begin{table}[h]
\begin{tabular}{c|c|c}
Exponent & Number of fields found & Field with largest discriminant \\
\hline
1        & 9                     & $\Q(\sqrt{-163})$               \\  
2        & 56                    & $\Q(\sqrt{-5460})$               \\
3        & 17                    & $\Q(\sqrt{-4027})$               \\
4        & 203                   & $\Q(\sqrt{-435435})$             \\       
5        & 27                    & $\Q(\sqrt{-37363})$              \\
6        & 432                   & $\Q(\sqrt{-5761140})$            \\
7        & 33                    & $\Q(\sqrt{-118843})$             \\
8        & 778                   & $\Q(\sqrt{-430950520})$
\end{tabular}

\caption{Number of imaginary quadratic fields with small exponent\label{tab_th_1}}
\end{table}
The discriminants with more than 7 decimal places are
-11148180, -12517428, -15337315, -15898740, -17168515, -28663635, -29493555, -31078723, -430950520.

\end{theorem}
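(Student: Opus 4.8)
The plan is to convert the hypothesis $E(D)\le 8$ into a sieve condition on the splitting of the small rational primes and then to verify the finitely many surviving discriminants directly. The linchpin is the following norm bound. Suppose a rational prime $p$ splits in $K=\Q(\sqrt D)$, say $p\OO_K=\idp\bar\idp$ with $\No\idp=p$, and let $d=\ord([\idp])$ be the order of its class in $\Cl(D)$. Then $\idp^{d}=(\alpha)$ is principal, and writing $\alpha=\tfrac12(u+v\sqrt D)$ with $u,v\in\Z$ and taking norms gives $u^2+|D|v^2=4p^{d}$. Since $\idp\ne\bar\idp$, the ideal $\idp^{d}$ is not fixed by complex conjugation, so $\alpha\notin\Q$ and $v\ne 0$; hence $|D|\le u^2+|D|v^2=4p^{d}\le 4p^{E(D)}$. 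In particular, if $E(D)\le 8$ then every prime $p$ with $4p^{8}<|D|$ is inert or ramified, i.e. $\chi_D(p)\le 0$ for all $p<(|D|/4)^{1/8}$.

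First I would turn this into an effective sieve. For $|D|\approx 3.1\cdot10^{20}$ the bound forces every prime up to about $300$ to be non-split, and for each odd prime $p\nondiv D$ the condition $\chi_D(p)=-1$ selects only about half of the residues modulo $p$, the remaining constraint being the finitely many ramified $p\mid D$. Imposing all of these congruences simultaneously by a sieve retains only a density of order $2^{-\pi(P)}$ of all discriminants, where $P=(|D|/4)^{1/8}$. Because the number of primes forced to be inert grows with $|D|$, this density collapses as $|D|$ increases, so the surviving large discriminants form a very short list, while the comparatively small range in which the sieve is still weak is short enough to be scanned directly. In practice I would organise the computation by the smallest split prime $p_0$: the discriminants whose smallest split prime is $p_0$ satisfy $|D|\le 4p_0^{8}$ and have every prime below $p_0$ inert or ramified, so each shell is a sparse set inside a bounded interval that can be enumerated explicitly. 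The single exponent-$8$ sieve already retains every field with $E(D)\le 8$; for the non-$2$-power types $C_3^s$, $C_5^r$, $C_7^r$ and $C_2^r\times C_3^s$ one may tighten it by using the smaller admissible exponent in the lemma, which shortens the candidate lists further.

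For each surviving discriminant I would then compute the structure of $\Cl(D)$, read off $E(D)$, discard the candidates whose exponent exceeds $8$, and tally the rest by exponent to produce Table~\ref{tab_th_1}. Since the survivors number relatively few and $|D|$ stays below $3.1\cdot10^{20}$, a standard class-group computation (composition and reduction of binary quadratic forms, with a baby-step/giant-step or analytic determination of $h(D)$ to fix the group order) is feasible for each one. Genus theory supplies the $2$-rank as $\omega(D)-1$ and serves both to cross-check the $2$-part and, together with the class number formula $h(D)=\tfrac1\pi\sqrt{|D|}\,L(1,\chi_D)$, to discard candidates whose class number is not of the admissible smooth shape $2^a3^b$, $5^a$ or $7^a$ before the full structure is computed. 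The exceptional discriminants $-3$ and $-4$, which carry extra units, are checked by hand.

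The hard part will not be any single deep step but the requirement that the sieve be at once complete and efficient. Completeness must be argued carefully: the norm bound shows that every $D$ with $E(D)\le 8$ does survive, so no field is lost, but one must verify that the variable cut-off $P=(|D|/4)^{1/8}$ is handled correctly across the whole range and that the fundamental-discriminant and ramified-prime bookkeeping leaves no gaps. Efficiency is the implementation challenge, since a naive scan of $3.1\cdot10^{20}$ discriminants is hopeless: the sieve must be engineered to exploit the $2^{-\pi(P)}$ sparsity directly rather than to test each $D$ in turn. Once the candidate list is produced, the remaining verifications are routine and the counts in the table follow.
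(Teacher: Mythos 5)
Your proposal is correct and essentially reproduces the paper's own argument: the norm-equation bound $u^{2}+|D|v^{2}=4p^{d}$ is exactly the paper's Lemma~\ref{lem1} together with the enumeration lemma of the direct-search section, your organisation by the smallest split prime $p_{0}$ with $O(p_{0}^{\exponent/2})$ enumeration of each shell matches the paper's direct search for $p_{0}\leq 197$, and your congruence sieve exploiting the $2^{-\pi(P)}$ sparsity is the paper's multiply-focused bit-vector sieve (Algorithm~\ref{multi_sieve}) for fields with no split prime $\leq 193$, after which both you and the paper compute the surviving class groups directly. The only cosmetic difference is your sieve cutoff near $300$ versus the paper's $193$ (the paper instead checks the exponent of the $1002279$ sieve survivors directly), which changes nothing essential.
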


\begin{theorem}
Assuming ERH, our computations found all fields for exponent up to 5 and 8. 
Assuming the non-existence of Siegel-zeros, 
our computation found all fields with exponents 2,4,8.

Without assuming any unproven statement we can conclude that 
there is at most one missing field with exponent 2,4 or 8. 
\end{theorem}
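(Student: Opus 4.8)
The plan is to turn each completeness claim into an effective upper bound $B$ for $|D|$, and then invoke the exhaustive computation up to $3.1\cdot10^{20}$: once $B\le 3.1\cdot10^{20}$ has been verified for the relevant exponent, no field of that exponent can have escaped the search. Two rather different sources of such a bound are needed, and their differing strengths account for the asymmetry between the exponents dividing $8$ and the exponents $6,7$.

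For the ERH part I would use the quoted bound $E(D)\gg \log|D|/\log\log|D|$ with an explicit constant. If $E(D)\le c$, solving this inequality for $|D|$ produces an effective $B(c)$; the point is then merely to check that $B(5)\le 3.1\cdot10^{20}$, which certifies completeness for every exponent up to $5$. Since $B(c)$ increases with $c$ and $B(6),B(7)$ turn out to exceed $3.1\cdot10^{20}$, this crude argument does not reach exponents $6$ and $7$, which is why they are absent from the ERH statement.

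The exponents $2,4,8$—the nontrivial divisors of $8$—are treated by a sharper argument that is largely unconditional and that also supplies the exponent $8$ case missing above. Here $\Cl(D)$ has exponent dividing $8$, hence is a $2$-group, so $h(D)$ is a power of $2$. Genus theory pins the $2$-rank at $\omega(D)-1$, and as the exponent is at most $2^3$ we get $h(D)\le 2^{3(\omega(D)-1)}$; bounding the number of prime divisors by $\omega(D)\ll \log|D|/\log\log|D|$ (the primes dividing $D$ multiply to $|D|$) yields $h(D)\le |D|^{o(1)}$. On the other hand the class number formula $h(D)=\frac{\sqrt{|D|}}{\pi}L(1,\chi_D)$, combined with any reasonable lower bound for $L(1,\chi_D)$, forces $h(D)\ge |D|^{1/2-o(1)}$. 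These two estimates are incompatible once $|D|$ is large, and making the constants explicit gives the desired $B\le 3.1\cdot10^{20}$. Under ERH one feeds in $L(1,\chi_D)\gg 1/\log\log|D|$; unconditionally one uses Tatuzawa's effective lower bound $L(1,\chi_D)>0.655\,\eps\,|D|^{-\eps}$, valid for all $D$ with at most a single exception. Adjoining the exponents $2,4,8$ obtained here to the list $\{1,\dots,5\}$ from the previous step yields the full ERH claim.

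The main obstacle—and the reason the unconditional conclusion is only \emph{at most one missing field}—is precisely the possible Siegel zero: the lone exception permitted by Tatuzawa is exactly a discriminant for which $L(1,\chi_D)$ is abnormally small, and for such a $D$ the lower bound $h(D)\ge|D|^{1/2-o(1)}$ can fail, so $|D|$ need not be bounded. Assuming no Siegel zeros restores the effective lower bound for $L(1,\chi_D)$ uniformly in $D$, eliminating this exception and upgrading the statement to complete determination of the fields of exponent $2,4,8$.
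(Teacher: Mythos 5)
Your overall architecture (effective bound $B$ plus exhaustive search, Tatuzawa's one permitted exception explaining the ``at most one missing field'' clause, ERH subsuming the no-Siegel-zero case so that exponent $8$ joins the list $\{1,\dots,5\}$) matches the paper, and your ERH treatment of exponents up to $5$ is essentially the paper's: an explicit split-prime bound (Bach--Sorenson) fed into the order estimate for a split prime ideal gives a bound of $2.3\cdot 10^{20}<3.1\cdot 10^{20}$ for exponent $5$. But there is a genuine, quantitatively fatal gap in your treatment of exponents $4$ and $8$: you claim that making the constants explicit in the tension between $h(D)\le 2^{3(\omega(D)-1)}$ and $h(D)\gg |D|^{1/2-\eps}$ ``gives the desired $B\le 3.1\cdot 10^{20}$.'' It does not. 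The paper's Theorem~\ref{upp_bound} carries out exactly this comparison, using $|D|\ge d_{\omega(D)}$, and the explicit crossover points are $d_{N_4}\le 2.38\cdot 10^{34}$ for exponent $4$ and $d_{N_8}\le 3.17\cdot 10^{110}$ for exponent $8$ (only exponent $2$, with $d_{N_2}\le 2.01\cdot 10^{11}$, lands inside the search range). The reason is that genus theory permits $h(D)=8^{\omega(D)-1}$ with $\omega(D)$ as large as $57$, and $|D|\ge d_{\omega(D)}$ only forces a contradiction with the Siegel--Tatuzawa lower bound once $|D|\ge d_{58}$. So your reduction of exponents $4$ and $8$ to the exhaustive computation up to $3.1\cdot 10^{20}$ is off by roughly $90$ orders of magnitude, and the proposal as written cannot certify completeness for those exponents.

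The missing idea is the paper's dedicated enumeration of the enormous remaining range $3.1\cdot 10^{20}<|D|<10^{110}$: any exponent-$2^r$ field there must have many prime factors, so one stratifies by $k=\omega(D)\le N_{2^r}=58$ and enumerates squarefree products $D=p_1^*\cdots p_k^*$ of prime fundamental discriminants below a per-stratum bound, which is computationally feasible because such products are sparse and because partial knowledge of $p_1^*,\dots,p_\ell^*$ already yields a lower bound on the rank of the Redei matrix, hence an upper bound on the $4$-rank, hence a much smaller admissible bound $B_s$ on $|D|$; candidates are then killed cheaply by the split-prime criterion of Lemma~\ref{lem1} before any class group is computed. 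Your identification of the Siegel zero as the sole obstruction, and of Chen's refinement of Tatuzawa as the tool giving unconditional completeness up to one exception, is correct --- but without the stratified enumeration the argument for exponents $4$ and $8$ does not close.
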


We remark that for exponent $2 \leq c\leq 8, c\ne 7$ it is (unconditionally) known that there
are only finitely many imaginary fields with exponent $c$. The explicit lists are only known using
suitable conjectures.

Let $$2=p_1<p_2<p_3<...$$ be the
sequence of prime numbers.  For $n\geq 1$
let $$d_n:=p_1\cdot\ldots\cdot p_n$$ be the product of the first $n$
prime numbers. In Section \ref{theoreticalbounds} we determine for
given $r\geq 1$ a number $N_{2^r}$ such that there is at most one
imaginary quadratic field with $|D|\geq d_{N_{2^r}}$ and $E(D)=2^r$.
For $r=1$ we have $ N_2=11 $ and $d_{N_2}\leq 2.01\cdot 10^{11}$.  For
$r=2$ we have $ N_{4}=24 $ and $d_{N_{4}}\leq 2.38\cdot 10^{34}$.  For
$r=3$ we have $ N_{8}=58 $ and $d_{N_{8}}\leq 3.17\cdot 10^{110}$.

\section{Some theoretical estimates}

As before let $D<0$ be a fundamental discriminant. 
In the following we want to use the knowledge of a small splitting prime $p$,
i.e. a prime $p\nmid D$ such that $p\OO_K=\idp \cdot \overline{\idp}$ splits into two
different prime ideals in $K$. We are interested to give some lower bound of the order
of the ideal $\idp$ in the class group $\Cl(D)$. The following lemma appears in different forms at least in \cite{De} p. 174-175, 
\cite{BoKi} Lemma 2, \cite{We} Lemma 5.  
\begin{lemma}\label{lem1}
  Let $c>0$ be an integer, let $p$ be a split prime in $K$, and let $\idp$ be a prime divisor of $p$ in $K$. If $p^\exponent < |D| /4$, then the order of $\idp$ in  the class group $\Cl(D)$ is strictly larger than $c$.
\end{lemma}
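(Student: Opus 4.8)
The natural approach is to argue by contradiction: suppose the order $d$ of $\idp$ in $\Cl(D)$ satisfies $d\le c$, and derive a contradiction with the hypothesis $p^c<|D|/4$. If $d\le c$, then $\idp^d$ is principal, say $\idp^d=(\alpha)$ with $\alpha\in\OO_K$. Since $p$ splits we have $\No(\idp)=p$, so taking ideal norms gives $\No(\idp^d)=p^d$, whence $|\No_{K/\Q}(\alpha)|=p^d$. Because $K$ is imaginary quadratic the field norm is nonnegative, so $\No(\alpha)=p^d\le p^c<|D|/4$.

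Next I would exploit the explicit shape of the norm form to force $\alpha$ to be rational. Writing $\alpha=(a+b\sqrt{D})/2$ with $a,b\in\Z$ of the parity dictated by $D$, one has $\No(\alpha)=(a^2+|D|b^2)/4$ since $D<0$. If $b\ne 0$, then $\No(\alpha)\ge |D|/4$, contradicting the bound just obtained; hence $b=0$ and $\alpha\in\Q\cap\OO_K=\Z$.

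The conclusion then follows from the splitting hypothesis. As $\alpha$ is a rational integer, the ideal $(\alpha)$ is stable under complex conjugation, so $\idp^d=\overline{(\alpha)}=\overline{\idp}^{\,d}$. But $p$ is split, so $\idp\ne\overline{\idp}$ are distinct prime ideals of $\OO_K$, and unique factorization of ideals forces $d=0$, contradicting $d\ge 1$. Therefore no $d\le c$ can occur, i.e. the order of $\idp$ is strictly larger than $c$.

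I expect the only delicate points to be the bookkeeping around the parity of $a,b$ and the integrality step $\Q\cap\OO_K=\Z$; these are routine. The essential and genuinely rigid part of the argument is the observation that a principal ideal of norm below $|D|/4$ must be generated by a rational integer, which is incompatible with $\idp$ being a power of a \emph{split} prime ideal.
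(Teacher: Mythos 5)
Your proof is correct, and it is exactly the classical argument behind this lemma: the paper itself gives no proof of Lemma~\ref{lem1}, citing only Deuring, Boyd--Kisilevsky and Weinberger, where your chain of steps (pass to a principal generator $\idp^d=(\alpha)$ with $|\No(\alpha)|=p^d<|D|/4$, force $b=0$ in $\alpha=(a+b\sqrt{D})/2$ since $b\neq 0$ would give $\No(\alpha)\geq|D|/4$, then rule out a rational generator because it would yield $\idp^d=\overline{\idp}^{\,d}$ with $\idp\neq\overline{\idp}$) appears in essentially this form. It also matches the computation the paper itself uses later, in the direct-search section, where order conditions on $\idp$ are translated into the solvability of $4p^{\exponent}=x^2+|D|y^2$ with $xy\neq 0$.
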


As shown in Lemma~\ref{lem1} a small split prime in an imaginary quadratic number field
already gives a good lower bound for the exponent of the class group. Here
we want to use the extended Riemann hypothesis (ERH). Using this, we can prove:
\begin{theorem}{\label{Bach_Bound}}
  Let $K$ be a quadratic number field such that the absolute value of
  the discriminant $D$ is larger than $e^{25} \approx 7.2\cdot 10^{10}$. 
  Assume the extended Riemann hypothesis. Then there exists a split
  prime $p$ such that
  $$p \leq (1.881\log(|D|)+2 \cdot 0.34 +5.5)^2.$$
\end{theorem}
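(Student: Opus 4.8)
The plan is to reduce the existence of a small split prime to a positivity statement about a weighted prime-counting sum, and then to control that sum under ERH by an explicit estimate. A prime $p \nmid D$ is split in $K$ precisely when the Kronecker symbol $\chi_D(p) := \bigl(\tfrac{D}{p}\bigr)$ equals $+1$; ramified primes ($p \mid D$) have $\chi_D(p)=0$ and inert primes have $\chi_D(p)=-1$. Since $D$ is a fundamental discriminant, $\chi_D$ is a real primitive Dirichlet character of conductor $|D|$. Thus it suffices to produce, below the asserted bound $X := (1.881\log|D| + 2\cdot 0.34 + 5.5)^2$, a prime $p$ with $\chi_D(p)=+1$.

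First I would introduce the von Mangoldt--weighted sum that isolates split primes,
\[
S(X) \;=\; \tfrac12 \sum_{n \le X} \Lambda(n)\bigl(1 + \chi_D(n)\bigr)\, w(n/X),
\]
where $w$ is a smooth nonnegative weight supported on $(0,1]$. For a prime $p$ the factor $1+\chi_D(p)$ equals $2$ (split), $0$ (inert), or $1$ (ramified), so any lower bound on $S(X)$ exceeding the combined contribution of ramified primes and prime powers forces a split prime $\le X$. Writing $S(X)=\tfrac12\bigl(\psi_w(X)+\psi_w(X,\chi_D)\bigr)$ with $\psi_w(X,\chi)=\sum_n \Lambda(n)\chi(n)\,w(n/X)$, the first term has main term $\sim X\int_0^1 w$, while the second is an oscillating character sum to be bounded.

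The analytic core is to estimate $\psi_w(X,\chi_D)$ under ERH. Via the explicit formula, $\psi_w(X,\chi_D)$ becomes a sum over the nontrivial zeros $\rho=\tfrac12+i\gamma$ of $L(s,\chi_D)$ of terms involving $X^{\rho}\widehat w(\rho)$; ERH places all $\rho$ on $\Re s=\tfrac12$, so each contributes at size $X^{1/2}$, and the zero-density estimate $N(T,\chi_D)\ll T\log(|D|T)$ controls the total. Choosing $w$ to make $\widehat w$ decay rapidly (a Fej\'er-- or Bach-type kernel) yields an explicit bound of the shape $|\psi_w(X,\chi_D)|\le \sqrt X\,(a\log|D|+b\log X+c)$ with numerically small constants; the self-dual estimate handles the error in $\psi_w(X)\approx X\int w$, the prime-power terms $n=p^k$ ($k\ge2$) are absorbed into an $O(\sqrt X\log X)$ contribution, and the ramified primes contribute at most $\sum_{p\mid D}\log p\le \log|D|$.

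Assembling these, if no split prime were $\le X$ then $X\int w$ would be bounded by $\log|D|+O(\sqrt X\log X)+|\psi_w(X,\chi_D)|$, which rearranges to $\sqrt X \le (2a/\!\int w)\log|D|+(2b/\!\int w)\log X+\cdots$. Substituting $\log X\approx 2\log\log|D|$ back in and optimizing $w$ produces the leading coefficient $1.881$, while the secondary terms collapse to $2\cdot 0.34+5.5$; the hypothesis $|D|>e^{25}$ guarantees we are in the range where the explicit zero-density and prime-power bounds hold and where the $\log X$ and lower-order terms are dominated, so that the clean two-term bound is valid. The main obstacle is exactly this bookkeeping: securing the explicit ERH bound for $\psi_w(X,\chi_D)$ with constants sharp enough to reach $1.881$, and then tracking every secondary contribution, the prime powers, the ramified primes, the $\log X$ versus $\log|D|$ split, and the normalization $\int w$, so that after back-substitution the estimate lands precisely on $(1.881\log|D|+2\cdot 0.34+5.5)^2$ rather than a weaker constant.
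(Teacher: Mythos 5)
You have correctly reverse-engineered the \emph{method} behind this theorem, but not its proof as it stands in the paper: the paper's entire proof is a citation to Table~3 on page~1731 of Bach--Sorenson, where this inequality is one row of an explicitly computed table of constants $(a,b,c)$ in bounds of the shape $p \le (a\log \Delta + b\,n + c)^2$ for the least prime with prescribed splitting behavior in a number field of degree $n$. For a quadratic field $n=2$, which is the structural origin of the term $2\cdot 0.34$ --- it is $b\cdot n$ with $b=0.34$, not, as you guess, a residue of back-substituting $\log X \approx 2\log\log|D|$. Likewise the hypothesis $|D|>e^{25}$ is simply the validity range attached to that row of the table (the paper remarks that other rows give weaker constants for smaller discriminants), not a regime condition you can expect to recover from soft domination arguments.

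As a self-contained proof your proposal therefore has a genuine gap, and you identify it yourself: the entire content of the statement lies in the specific numbers $1.881$, $0.34$, $5.5$ and the threshold $e^{25}$, and your outline defers exactly these to unperformed ``bookkeeping.'' The assertion that ``optimizing $w$ produces the leading coefficient $1.881$'' is not derived from anything; obtaining it requires the full explicit-formula apparatus with all constants made numerical --- explicit zero-counting estimates for $N(T,\chi_D)$ with concrete constants, explicit control of the archimedean $\Gamma$-factor and trivial-zero contributions, tail estimates for the chosen kernel, and a numerical optimization over the weight and the cutoff parameters, which in Bach--Sorenson is carried out by machine computation. Your qualitative skeleton (isolating split primes via $\frac12\sum \Lambda(n)(1+\chi_D(n))w(n/X)$, ERH putting zeros on $\Re s = \frac12$, prime powers and ramified primes as lower-order terms) is the right one and would yield \emph{some} bound $p \ll (\log|D|)^2$ with an explicit but unoptimized constant; it does not, without the omitted explicit computations, establish the stated inequality. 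Given that the paper itself treats this as an imported result, the honest options are either to cite Bach--Sorenson as the paper does, or to commit to reproducing their explicit computation in full --- the sketch as written does neither.
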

\begin{proof}
  This is the result of Table 3 on page 1731 in \cite{BaSo}.
\end{proof}
We remark that in this paper there are similar statements with weaker constants for small
discriminants. We used those in the following table for exponent smaller or equal to 3.

\begin{table}[h]
\begin{tabular}{c|c}
Exponent & Bound for $|D|$ \\
\hline
1 & $1.7 \cdot 10^3$ \\
2 & $6 \cdot 10^6$ \\
3 & $9.7 \cdot 10^{10}$  \\
4 & $3.4 \cdot 10^{15}$ \\
5 & $2.3 \cdot 10^{20}$ \\
6 & $2.5 \cdot 10^{25}$ \\
7 & $3.9 \cdot 10^{30}$ \\
8 & $8.9 \cdot 10^{35}$
\end{tabular}
\smallskip 

\caption{ERH based bound (Theorem \ref{Bach_Bound}) for various exponents}
\end{table}

\section{Using Siegel-Tatuzawa bounds}

In the already cited paper by Weinberger \cite{We}, it is suggested
to use efficient bounds based on Siegel zeros. Compared to the original
Siegel bounds they have the advantage that the constants can be explicitly
computed. Weinberger used this approach to determine all (assuming that there
are no Siegel zeros) imaginary quadratic number fields of exponent 2. If we
do not assume any unproven conjecture it is shown that at most one field
is missing.

For a fundamental discriminant $D$ we define the character
$$\chi(n):=\chi_D(n):=\jac{D}{n},$$
where the symbol denotes the Kronecker symbol. We associate to it the following
$L$--series:
$$L(s,\chi)=\sum_{n=1}^\infty \frac{\chi(n)}{n^s}, \Re(s)>0.$$
Let $h(D):=|\Cl(D)|$ be the class number of the field $K=\Q(\sqrt{D})$. 
For $D<-4$ it is well known that
$$h(D)=\frac{\sqrt{|D|}L(1,\chi)}{\pi}.$$
We are interested in good lower bounds for the value $L(1,\chi)$.
It is known that there are no zeros of $L(s,\chi)$ for $\Re(s)\geq 1$.
We get efficient lower bounds, if we assume that there
are no real zeros of $L(s,\chi)$ which are close to 1. Tatuzawa \cite[Lemma 9 and Theorem 2]{Tat} 
proved:
 \begin{lemma}\label{Siegel1}   %
  Let $0<\eps<1/2$. There is at most one $|D|\geq \max(e^{1/\eps},e^{11.2})$ such that
  $$L(1,\chi)\leq \frac{0.655\eps}{|D|^\eps}.$$
    In this case $L(s,\chi)$ has a real zero $s$ with $1-\eps/4<s<1$.
\end{lemma}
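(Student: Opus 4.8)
The plan is to follow the Siegel--Tatuzawa method, whose engine is the nonnegativity of the Dirichlet coefficients of the Dedekind zeta function of a biquadratic field; both the uniqueness assertion (\emph{at most one}) and the existence of the exceptional zero fall out of the same machinery. First I would argue by contradiction: suppose two distinct fundamental discriminants $D_1,D_2$ with $|D_1|\le|D_2|$, both of absolute value at least $\max(e^{1/\eps},e^{11.2})$, satisfy $L(1,\chi_i)\le 0.655\eps|D_i|^{-\eps}$, where $\chi_i=\chi_{D_i}$. Since $\chi_1\ne\chi_2$ are real primitive characters, $\chi_1\chi_2$ is a nontrivial real character and
$$F(s):=\zeta(s)\,L(s,\chi_1)\,L(s,\chi_2)\,L(s,\chi_1\chi_2)=\zeta_K(s),\qquad K=\Q(\sqrt{D_1},\sqrt{D_2}),$$
is the Dedekind zeta function of the biquadratic field $K$. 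Hence $F(s)=\sum_{n\ge1}a_nn^{-s}$ has $a_1=1$, $a_n\ge0$, converges for $\Re(s)>1$, and has a simple pole at $s=1$ with residue $\rho=L(1,\chi_1)L(1,\chi_2)L(1,\chi_1\chi_2)$.

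Next I would prove the positivity lemma that drives everything. Representing a smoothed sum $\sum_n a_n n^{-\sigma}e^{-n/X}$ by a Mellin integral of $F(s)\Gamma(s-\sigma)X^{s-\sigma}$ and shifting the contour past the pole at $s=1$, the nonnegativity of the $a_n$ (the $n=1$ term alone contributes a fixed positive amount) yields, for a real parameter $\sigma_0\in(1/2,1)$ to be chosen, an inequality of the shape
$$F(\sigma_0)\ \ge\ \tfrac12-c_1\,\frac{\rho}{1-\sigma_0}\,|D_1D_2|^{\,c_2(1-\sigma_0)},$$
where the conductor factor comes from convexity (Phragm\'en--Lindel\"of) bounds for the four $L$-factors on vertical lines. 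In parallel, the exceptional-zero theory (the Hadamard product / explicit formula for $L(s,\chi_2)$) shows that the smallness of $L(1,\chi_2)$ is equivalent, up to powers of $\log|D_2|$, to the existence of a real zero $\beta$ of $L(s,\chi_2)$ with $1-\beta\asymp L(1,\chi_2)$; in particular $\beta$ lies in $(1-\eps/4,1)$ and $1-\beta$ is extremely small.

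Taking $\sigma_0=\beta$ makes the left-hand side vanish, since $F(\beta)=L(\beta,\chi_2)\cdot(\cdots)=0$, so the lemma rearranges to $\rho\ge\frac{1-\beta}{2c_1}|D_1D_2|^{-c_2(1-\beta)}$. Now comes the decisive cancellation: substituting $\rho=L(1,\chi_1)L(1,\chi_2)L(1,\chi_1\chi_2)$ and bounding $L(1,\chi_2)\le c_3(1-\beta)\log^2|D_2|$ (mean value theorem, using the zero at $\beta$) together with $L(1,\chi_1\chi_2)\le c_4\log|D_1D_2|$, the factor $1-\beta$ cancels and one is left with
$$L(1,\chi_1)\ \ge\ \frac{|D_1D_2|^{-c_2(1-\beta)}}{c_5\,\log^3|D_1D_2|}.$$
Since $1-\beta$ is tiny, $|D_1D_2|^{-c_2(1-\beta)}$ is bounded away from $0$, and this lower bound contradicts $L(1,\chi_1)\le0.655\eps|D_1|^{-\eps}$ once $|D_1|\ge e^{1/\eps}$; this rules out a second exception. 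For the single surviving exceptional $D$, the same exceptional-zero input applied to $\chi=\chi_D$ shows that $L(1,\chi)\le0.655\eps|D|^{-\eps}$ forces a real zero with $1-\eps/4<\beta<1$, which is the asserted Siegel zero.

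The conceptual steps are routine once the positivity lemma is in hand; the real work, and the main obstacle, is quantitative. Producing the explicit constant $0.655$, the exponent $\eps/4$, and the threshold $e^{11.2}$ requires carrying fully explicit constants through the contour shift and the Gamma-factor and convexity estimates, through the two mean value arguments, and through the exceptional-zero comparison $1-\beta\asymp L(1,\chi)$, and then optimizing the free parameters $\sigma_0$ and $X$. This numerology is delicate but elementary, which is why the clean statement is quoted from Tatuzawa rather than reproved here.
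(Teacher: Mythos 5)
First, a point of comparison: the paper does not prove this lemma at all --- it is imported verbatim from Tatuzawa (\cite[Lemma 9 and Theorem 2]{Tat}), and the subsequent lemmas only \emph{apply} it. So the question is whether your sketch would actually reconstruct Tatuzawa's result. Your architecture is the right family of ideas: the product $\zeta(s)L(s,\chi_1)L(s,\chi_2)L(s,\chi_1\chi_2)$ with nonnegative coefficients (up to harmless imprimitivity of $\chi_1\chi_2$), evaluation at a real zero, and the mean-value cancellation of $1-\beta$ is exactly Siegel's classical machinery, and the one-character positivity argument does give the second assertion in the correct contrapositive form: no real zero in $(1-\eps/4,1)$ implies $L(1,\chi)>0.655\eps|D|^{-\eps}$.

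However, the decisive step of your uniqueness argument has a genuine gap, in three linked places. (a) ``in particular $1-\beta$ is extremely small'' is unjustified: the positivity argument only yields $1-\beta<\eps/4$, and the mean-value inequality $L(1,\chi)\le c(1-\beta)\log^2|D|$ gives a \emph{lower} bound for $1-\beta$; the converse bound $1-\beta\ll L(1,\chi)\log^2|D|$ that you invoke as an ``equivalence'' is a nontrivial theorem (Goldfeld--Schinzel type), not a consequence of the Hadamard product, and even it does not make $1-\beta$ uniformly tiny here. (b) Consequently the claim that $|D_1D_2|^{-c_2(1-\beta)}$ is ``bounded away from $0$'' fails: with only $1-\beta<\eps/4$ and $|D_2|$ unbounded, the quantity $(1-\beta)\log|D_1D_2|$ need not be bounded. (c) Even granting (b), your terminal inequality $L(1,\chi_1)\ \ge\ |D_1D_2|^{-c_2(1-\beta)}\big/\bigl(c_5\log^3|D_1D_2|\bigr)$ does not contradict $L(1,\chi_1)\le 0.655\eps|D_1|^{-\eps}$: the left side tends to $0$ as $|D_2|\to\infty$ while the right side is fixed. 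Concretely, take $|D_1|=e^{1/\eps}$ and $|D_2|=e^{3/\eps}$; then $\log^3|D_1D_2|=64/\eps^3$, so even with the power factor equal to $1$ your lower bound is of order $\eps^3$, against an upper bound of order $\eps$ --- no contradiction for small $\eps$, and this is not an extreme regime. The repair is to keep the second exceptional smallness $L(1,\chi_2)\le 0.655\eps|D_2|^{-\eps}$ in play in the final comparison (rather than converting it wholesale into a statement about $1-\beta$) and to play the exponent $c_2(1-\beta)$ against $\eps$ with carefully optimized explicit constants; that bookkeeping is precisely the content of Tatuzawa's Lemma~9 and Theorem~2. As written, your sketch is essentially Siegel's ineffective argument, and the specifically Tatuzawa content --- effectivity with at most one exception --- is exactly what the hand-waved step must supply; calling it ``routine numerology'' understates where the theorem actually lives.
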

As consequences we obtain
\begin{lemma}\label{Siegel} Let 
$|D|>e^{11.2}$. If $L(s,\chi)\neq 0$ for $1-\frac{1}{4\log |D|}\leq s<1$ then 
$$h(D)>\frac{0.655}{\pi e}\cdot\frac{\sqrt{|D|}}{\log |D|}.$$
\end{lemma}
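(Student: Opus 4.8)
The plan is to derive Lemma~\ref{Siegel} as a direct consequence of Lemma~\ref{Siegel1} (Tatuzawa) by making the specific choice $\eps = \frac{1}{4\log|D|}$. This choice is the natural one because the hypothesis of Lemma~\ref{Siegel} forbids real zeros in the interval $1-\frac{1}{4\log|D|}\leq s<1$, while the conclusion of Lemma~\ref{Siegel1} locates any exceptional zero in the interval $1-\eps/4<s<1$; setting $\eps = \frac{1}{4\log|D|}$ makes $\eps/4 = \frac{1}{16\log|D|}$, so the zero-free region we assume strictly contains the interval where Tatuzawa's exceptional zero would have to lie.

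First I would verify that this $\eps$ satisfies the hypotheses of Lemma~\ref{Siegel1}, namely $0<\eps<1/2$. Since $|D|>e^{11.2}$ we have $\log|D|>11.2$, hence $\eps = \frac{1}{4\log|D|} < \frac{1}{44.8} < 1/2$, and $\eps>0$ is clear. I would also check the threshold condition $|D|\geq\max(e^{1/\eps},e^{11.2})$. With this choice $e^{1/\eps}=e^{4\log|D|}=|D|^4$, so naively the condition $|D|\geq|D|^4$ fails for $|D|>1$; this is the step I expect to be the main obstacle and the point where the argument must be read carefully. The resolution is that Tatuzawa's result should be applied contrapositively as a statement about the \emph{single possible} exceptional discriminant: for the value of $\eps$ chosen, either $D$ is the unique exceptional modulus (in which case the zero-free hypothesis of Lemma~\ref{Siegel} would force a contradiction via the ``in this case'' clause), or else the bound $L(1,\chi)>\frac{0.655\eps}{|D|^\eps}$ holds. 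One must track which side of the $\max$ governs, and I would expect the intended reading to be that ruling out the near-$1$ real zero removes the exceptional case entirely.

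Granting the zero-free hypothesis, the clause ``In this case $L(s,\chi)$ has a real zero $s$ with $1-\eps/4<s<1$'' tells us that \emph{if} $D$ were the exceptional discriminant, then $L(s,\chi)$ would vanish for some $s$ with $1-\frac{1}{16\log|D|}<s<1$, and such an $s$ lies in the assumed zero-free interval $[1-\frac{1}{4\log|D|},1)$ — a contradiction. Therefore $D$ is not exceptional, and Lemma~\ref{Siegel1} yields the unconditional-on-$D$ bound
\begin{equation*}
L(1,\chi) > \frac{0.655\eps}{|D|^\eps} = \frac{0.655}{4|D|^\eps \log|D|}.
\end{equation*}

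Finally I would convert this into the stated lower bound for $h(D)$ using the class number formula $h(D)=\frac{\sqrt{|D|}\,L(1,\chi)}{\pi}$ valid for $D<-4$. Substituting gives
\begin{equation*}
h(D) = \frac{\sqrt{|D|}\,L(1,\chi)}{\pi} > \frac{0.655\,\sqrt{|D|}}{4\pi|D|^\eps \log|D|}.
\end{equation*}
The remaining task is to absorb the factor $|D|^\eps = |D|^{1/(4\log|D|)} = e^{1/4}$ into the constant. This is the clean final computation: $\frac{1}{4 e^{1/4}}$ must combine with $0.655$ to reproduce $\frac{0.655}{\pi e}$, and I would check the numerology here, since $e^{1/4}\approx 1.284$ while the target denominator carries a full factor of $e$; any discrepancy would indicate a slightly different optimal $\eps$ or a rounding of constants, so this arithmetic reconciliation — rather than any deep analytic input — is where I would focus the verification.
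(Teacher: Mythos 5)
Your choice $\eps=\frac{1}{4\log|D|}$ is wrong, and the obstacle you flagged is fatal rather than a point requiring careful reading. Lemma~\ref{Siegel1} only asserts anything about discriminants with $|D|\geq\max(e^{1/\eps},e^{11.2})$; with your choice $e^{1/\eps}=|D|^4>|D|$, so your particular $D$ lies strictly below the threshold and the lemma is simply silent about it. There is no contrapositive or ``exceptional versus non-exceptional'' dichotomy to invoke: the dichotomy concerns only moduli above the threshold, and yours is not among them. Moreover, even granting applicability, your constants do not reconcile. With $|D|^\eps=e^{1/4}$ you would obtain $h(D)>\frac{0.655}{4e^{1/4}\pi}\cdot\frac{\sqrt{|D|}}{\log|D|}$, and since $4e^{1/4}\approx 5.14>e\approx 2.72$, this is strictly weaker than the claimed bound $\frac{0.655}{\pi e}\cdot\frac{\sqrt{|D|}}{\log|D|}$; the ``arithmetic reconciliation'' you postponed cannot succeed.

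The correct choice, and the one the paper makes, is $\eps=\frac{1}{\log|D|}$. Then $e^{1/\eps}=|D|$, so the threshold $|D|\geq\max(e^{1/\eps},e^{11.2})$ holds (using $\log|D|>11.2$, which also gives $0<\eps<1/2$), and $|D|^\eps=e$. The exceptional zero of Lemma~\ref{Siegel1} would lie in $\left(1-\frac{\eps}{4},1\right)=\left(1-\frac{1}{4\log|D|},1\right)$, which is contained in the hypothesized zero-free interval, so $D$ cannot be the exception; hence $L(1,\chi)>\frac{0.655\,\eps}{|D|^\eps}=\frac{0.655}{e\log|D|}$, and the class number formula $h(D)=\frac{\sqrt{|D|}\,L(1,\chi)}{\pi}$ yields exactly the stated constant. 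The structural point you misjudged: it is $\eps/4$, not $\eps$, that must match the width $\frac{1}{4\log|D|}$ of the assumed zero-free interval. Your choice made the zero-free region strictly contain Tatuzawa's interval, but at the cost of violating the threshold; the choice $\eps=1/\log|D|$ satisfies both constraints simultaneously, meeting the threshold with equality.
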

\begin{proof}
We apply Lemma \ref{Siegel1} with $\eps=1/\log(|D|)$ and obtain 
$$L(1,\chi)> \frac{0.655\eps}{|D|^\eps}=\frac{0.655}{e\log |D|}.$$ Using the class number formula  
$L(1,\chi)=\frac{\pi h(D)}{\sqrt{|D|}}, $ we get the assertion.
\end{proof}

\begin{lemma}\label{Siegel_one}
Let $A\geq e^{11.2}$. For all $|D|\geq A $ with at most one exception it holds that 
$$h(D)> \frac{0.655}{\pi\cdot \log A}\cdot |D|^{\frac12-\frac{1}{\log A}}.$$
For $|D|=A^m$ with $m\geq 1$ we get:
$$h(D)> \frac{m \cdot 0.655\cdot  \sqrt{|D|}}{\pi\cdot e^m\cdot \log |D|}$$
with at most one exception.
\end{lemma}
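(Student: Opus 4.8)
The plan is to deduce both inequalities directly from Lemma~\ref{Siegel1} via the single choice $\eps=1/\log A$, combined with the class number formula. First I would verify that this choice is admissible. Since $A\geq e^{11.2}$ we have $\log A\geq 11.2$, hence $\eps=1/\log A<1/2$ as required by the hypothesis $0<\eps<1/2$; moreover $e^{1/\eps}=e^{\log A}=A\geq e^{11.2}$, so that $\max(e^{1/\eps},e^{11.2})=A$. Thus Lemma~\ref{Siegel1} applies with threshold exactly $A$, and the ``at most one exception'' in our statement will be inherited verbatim from the exceptional $|D|$ produced there.

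Next, Lemma~\ref{Siegel1} asserts that there is at most one $|D|\geq A$ for which $L(1,\chi)\leq \frac{0.655\eps}{|D|^\eps}$. Hence for every $|D|\geq A$ with at most one exception we obtain the strict lower bound $L(1,\chi)>\frac{0.655}{\log A\cdot |D|^{1/\log A}}$. Inserting the class number formula $L(1,\chi)=\pi h(D)/\sqrt{|D|}$ and solving for $h(D)$ gives $h(D)>\frac{0.655}{\pi\log A}\cdot\frac{\sqrt{|D|}}{|D|^{1/\log A}}=\frac{0.655}{\pi\log A}\,|D|^{\frac12-\frac{1}{\log A}}$, which is the first claimed inequality.

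For the second statement I would specialize to $|D|=A^m$ with $m\geq 1$ and simplify the two $A$-dependent factors. Writing $A=e^{\log A}$, the exponent term becomes $|D|^{1/\log A}=A^{m/\log A}=e^{m}$, while $\log A=\frac{1}{m}\log|D|$. Substituting both into the first inequality converts $\frac{0.655}{\pi\log A}\,|D|^{\frac12-\frac{1}{\log A}}$ into $\frac{m\cdot 0.655\cdot\sqrt{|D|}}{\pi\cdot e^{m}\cdot\log|D|}$, as desired.

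The argument is essentially a substitution, so there is no genuine analytic obstacle; the only point requiring care is the bookkeeping of the first paragraph, namely checking that the chosen $\eps$ simultaneously meets both constraints of Lemma~\ref{Siegel1} (so the threshold collapses to $A$) and that the single exception in our conclusion is exactly the at most one exception furnished by Tatuzawa's lemma.
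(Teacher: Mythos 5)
Your proposal is correct and follows essentially the same route as the paper: apply Lemma~\ref{Siegel1} with $\eps=1/\log A$, invoke the class number formula $h(D)=\sqrt{|D|}\,L(1,\chi)/\pi$, and obtain the second bound from the first by substituting $|D|=A^m$ (so that $|D|^{1/\log A}=e^m$ and $\log A=\log|D|/m$). Your additional verification that $\eps<1/2$ and that $\max(e^{1/\eps},e^{11.2})=A$ is a detail the paper leaves implicit, and is a welcome bit of care.
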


\begin{proof}
We apply Lemma \ref{Siegel1} with  $\eps=1/\log A$ and obtain  
$$L(1,\chi)> \frac{0.655\eps}{|D|^\eps},$$ 
$$h(D)=\frac{\sqrt{|D|}L(1,\chi)}{\pi}> \frac{0.655}{\pi\cdot \log A}\cdot |D|^{\frac12-\frac{1}{\log A}}$$
for all $|D|\geq A $ with at most one exception. The second statement is a straightforward computation.
\end{proof}

The following lemma gives some improvement for the case with one exception.
\begin{lemma}\label{Chen}
  Let $A\ge 10^6$, $\eps:=1/\log A$ and $m:=\frac{\log |D|}{\log
    A}$. Then for all $D$ with $|D|\geq A$ we have with at most one
  exception:
  $$L(1,\chi) \geq \min \left(\frac{1}{7.732 \log{|D|}}, 1.5 \cdot 10^6 \frac{\eps}{|D|^\eps} \right)$$
  \begin{equation}\label{b_ch}
 h(D)\geq \min \left(\frac{\sqrt{|D|}}{\pi \cdot 7.732 \cdot \log |D|},
 \frac{m \cdot 1.5\cdot 10^6 \cdot \sqrt{|D|}}{\pi\cdot e^m \cdot \log{|D|}}\right).
 \end{equation}
  Note that for $m\leq 19.2$ the first number is the minimum.
\end{lemma}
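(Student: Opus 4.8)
The two displays are equivalent through the class number formula $h(D)=\sqrt{|D|}\,L(1,\chi)/\pi$: multiplying each entry of the minimum in the $L(1,\chi)$--bound by $\sqrt{|D|}/\pi$ and inserting $|D|^{\eps}=e^{m}$ and $\eps=m/\log|D|$ produces \eqref{b_ch} verbatim. So the plan is to establish the lower bound for $L(1,\chi)$ and then read off \eqref{b_ch}. Writing $B_1:=\frac{1}{7.732\log|D|}$ and $B_2:=\frac{1.5\cdot10^{6}\eps}{|D|^{\eps}}$, it suffices to prove for every $D$ with $|D|\ge A$, outside a single exception, that \emph{at least one} of $L(1,\chi)\ge B_1$ or $L(1,\chi)\ge B_2$ holds, since either inequality forces $L(1,\chi)\ge\min(B_1,B_2)$. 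The closing remark is the elementary inequality $m/e^{m}\ge(7.732\cdot1.5\cdot10^{6})^{-1}$ for $m\le19.2$, which (using $B_2=\frac{1.5\cdot10^{6}m}{e^{m}\log|D|}$) says exactly $B_2\ge B_1$ there, so $B_1$ is the minimum in the computationally relevant range; I would verify it by monotonicity of $m\mapsto m/e^{m}$ and evaluation at $m=19.2$.

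For the two bounds I would start from Lemma~\ref{Siegel1} (Tatuzawa) with $\eps=1/\log A$, exactly as in Lemma~\ref{Siegel_one}: it singles out at most one discriminant $D_0$ with $|D_0|\ge A$, which I take as the permitted exception, and gives $L(1,\chi)>0.655\,\eps/|D|^{\eps}$ for all other $D$. This already has the \emph{shape} of $B_2$, but with the constant $0.655$ in place of $1.5\cdot10^{6}$, so two upgrades remain. First, for those $D\ne D_0$ whose $L$--function has no real zero in $[1-\tfrac1{4\log|D|},1)$, Lemma~\ref{Siegel} gives the stronger $L(1,\chi)>\frac{0.655}{e\log|D|}>\frac{1}{4.15\log|D|}$, comfortably above $B_1$; optimising the choice of $\eps$ and carrying all constants explicitly is what produces the clean (slightly weaker, one--exception) constant $7.732$ in $B_1$. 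Second, the discriminants $D\ne D_0$ whose zeros do approach $1$ are the genuine difficulty: here I would invoke the Deuring--Heilbronn repulsion generated by the exceptional $D_0$. Working in the biquadratic field, the factorisation
\[
\zeta_{\Q(\sqrt{D_0},\sqrt D)}(s)=\zeta(s)\,L(s,\chi_{D_0})\,L(s,\chi)\,L(s,\chi_{D_0}\chi)
\]
has a Dirichlet series with non-negative coefficients and constant term $1$; feeding the extremely small quantity $1-\beta_0$ (where $\beta_0$ is the real zero of $L(s,\chi_{D_0})$ forced by $D_0$'s tiny $L$--value) into this positivity pushes the zeros of $L(s,\chi)$ away from $1$ and thereby lifts $L(1,\chi)$ up to the level $B_2$. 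It is this repulsion, quantified against the threshold $A\ge10^{6}$, that accounts for the large constant $1.5\cdot10^{6}$.

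The main obstacle is making the second upgrade fully \emph{explicit}: turning the qualitative repulsion into the precise factor $1.5\cdot10^{6}$ requires carrying the dependence on $1-\beta_0$ through an explicit formula or Hadamard--product estimate with every constant effective, and checking that $A\ge10^{6}$ is large enough for the exceptional $D_0$ to dominate all the others simultaneously. For this explicit bookkeeping I would appeal to the cited bound of Chen rather than rederive it, and I would take care that the single exception produced by the repulsion argument coincides with the Tatuzawa exception $D_0$, so that the whole statement carries \emph{at most one} exception as claimed.
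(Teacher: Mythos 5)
Your proposal ultimately defers the entire quantitative content (the constants $7.732$ and $1.5\cdot 10^6$, and the single-exception repulsion argument) to Chen's paper, which is exactly what the paper does: its whole proof is the one-line citation ``This is the main result of \cite{Ch}.'' Your supplementary verifications --- deriving the class-number bound from the $L(1,\chi)$ bound via $h(D)=\sqrt{|D|}L(1,\chi)/\pi$ with $|D|^{\eps}=e^{m}$ and $\eps=m/\log|D|$, and checking $m/e^{m}\geq (7.732\cdot 1.5\cdot 10^{6})^{-1}$ for $m\leq 19.2$ by monotonicity --- are correct and merely make explicit steps the paper leaves implicit, so the two proofs take essentially the same approach.
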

\begin{proof}
  This is the main result of \cite{Ch}.
\end{proof}

Let us compare the result of Lemma \ref{Chen} with Lemmata \ref{Siegel} and \ref{Siegel_one}.
When comparing Lemma \ref{Siegel} with Lemma \ref{Chen}, then we see that Lemma \ref{Siegel} gives
a lower class group bound which is about a factor 2 better. But here we have to assume that there are
no Siegel zeros. When we compare Lemma \ref{Siegel_one} with Lemma \ref{Chen}, then we see that for $m\geq 2.6$
Lemma \ref{Chen} is better. Note that the second number in the minimum is always better 
than the bound in Lemma \ref{Siegel_one}. Furthermore it is important to note that 
$(10^6)^{19.2} >10^{115}$ which is sufficient for all our computations for exponent 8.

\section {Fields with exponent a power of two} \label{theoreticalbounds}

Weinberger proved in \cite{We}, Theorem 1,  
that there is at most one imaginary quadratic field with $|D|>5460$ and $E(D)=2$. 
We want to generalize his result to exponent $2^r$.
Let 
$$2=p_1<p_2<p_3<...$$ 
be the sequence of prime numbers. For $n\geq 1$ let $$d_n:=p_1\cdot\ldots\cdot p_n$$ be the 
product of the first $n$ prime numbers. In this section we determine for given $r\geq 1$ a 
number $N_{2^r}$ such that there is at most one imaginary quadratic field with 
$|D|\geq d_{N_{2^r}}$ and $E(D)=2^r$.

\begin{lemma}\label{lem6}
Let $D<0$ be a fundamental discriminant. If $E(D)=2^r$ with $r\geq1 $ then $$h(D)\leq 2^{r(\omega(D)-1)}.$$
\end{lemma}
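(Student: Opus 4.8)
The plan is to turn the hypothesis $E(D)=2^r$ into structural information about $\Cl(D)$ and then bound its order by separating two ingredients: the \emph{number} of cyclic factors in a decomposition of $\Cl(D)$, and the \emph{size} of each factor. The first is supplied by genus theory, the second directly by the exponent assumption.

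First I would observe that $E(D)=2^r$ means every element of $\Cl(D)$ has order dividing $2^r$, so $\Cl(D)$ is a finite abelian group whose exponent is a power of $2$; hence it is a $2$-group. By the structure theorem for finite abelian groups I may write
$$\Cl(D)\cong \bigoplus_{i=1}^{s}\Z/2^{a_i}\Z,$$
where $1\le a_i\le r$ for each $i$ (the upper bound $a_i\le r$ being exactly the statement that the exponent is $2^r$), and where $s$ denotes the $2$-rank of $\Cl(D)$. Then
$$h(D)=|\Cl(D)|=\prod_{i=1}^{s}2^{a_i}=2^{\sum_{i=1}^{s}a_i}\le 2^{rs},$$
so everything reduces to identifying $s$.

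The key step, and the only one requiring genuine input, is the genus-theoretic computation of the $2$-rank. For an imaginary quadratic field the narrow and ordinary class groups coincide, and Gauss's theory of genera gives an isomorphism $\Cl(D)/\Cl(D)^2\cong(\Z/2\Z)^{\omega(D)-1}$; equivalently, the $2$-rank of $\Cl(D)$ equals $\omega(D)-1$, where $\omega(D)$ is the number of distinct prime divisors of $D$. Substituting $s=\omega(D)-1$ into the estimate above yields $h(D)\le 2^{r(\omega(D)-1)}$, which is the assertion. I expect the genus theory to be the main obstacle only in the sense of correctly invoking the classical $2$-rank formula for fundamental discriminants; once that is in hand, the remaining steps are immediate consequences of the structure theorem, and no computation beyond the single inequality $\sum a_i\le rs$ is needed.
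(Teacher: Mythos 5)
Your proof is correct and follows essentially the same route as the paper: decompose $\Cl(D)$ as a $2$-group via the structure theorem, bound each cyclic factor by $2^r$ using the exponent hypothesis, and identify the number of factors (the $2$-rank) as $\omega(D)-1$ by genus theory. The paper merely groups the cyclic summands by order and writes $h(D)=2^{a_1+2a_2+\cdots+ra_r}\leq 2^{r(a_1+\cdots+a_r)}$, which is your inequality $\sum a_i\leq rs$ in different notation.
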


\begin{proof}
   If $E(D)=2^r$ then the class group $\Cl(D)$ is isomorphic to 
   $C_2^{a_1}\times C_{2^2}^{a_2}\times\ldots C_{2^r}^{a_r}$, $a_1\geq 0,\ldots, a_{r-1}\geq 0$, $a_r>0$, 
   and we have $$h(D)=2^{a_1+2a_2+\ldots+ra_r}.$$
	By genus theory we have that 
	$$ a_1+\ldots+a_r=\omega(D)-1, $$ hence 
	$$h(D)=2^{a_1+2a_2+\ldots+ra_r}\leq 2^{r(a_1+\ldots+a_r)}=2^{r(\omega(D)-1)}. $$
	
\end{proof}

\begin{lemma}\label{lem7}
If $D$ is a fundamental discriminant then  
$$|D|\geq d_{\omega(D)}.$$
\end{lemma}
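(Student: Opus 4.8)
The plan is to exploit the explicit classification of fundamental discriminants together with a simple comparison of prime factors. Writing $n := \omega(D)$ and letting $q_1 < q_2 < \cdots < q_n$ be the distinct primes dividing $D$, I would first recall that a fundamental discriminant is either squarefree and congruent to $1 \bmod 4$ (the odd case), or of the form $4m$ with $m \equiv 2,3 \bmod 4$ squarefree (the even case, in which $4 \mid D$). This structural fact is what controls how the primes $q_i$ relate to the canonical primes $p_i$, and it is the only external input needed.

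First I would treat the odd case. Here every $q_i$ is odd, so $q_i$ is at least the $i$-th odd prime; that is, $q_i \geq p_{i+1}$ for each $i$. Since such a $D$ is squarefree, $|D| = q_1 q_2 \cdots q_n \geq p_2 p_3 \cdots p_{n+1}$. This product equals $d_{n+1}/2 = (p_{n+1}/2)\, d_n$, and as $p_{n+1} \geq 3$ it is strictly larger than $d_n$, which already gives the claim in this case.

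Next I would handle the even case, where $2 = q_1$ divides $D$ to a power at least $2$ and the remaining primes $q_2 < \cdots < q_n$ are odd, hence $q_i \geq p_i$ for $i \geq 2$. Therefore $|D| \geq 2^2 \cdot q_2 \cdots q_n \geq 4 \cdot p_2 \cdots p_n = 2\, d_n \geq d_n$. Combining the two cases yields $|D| \geq d_{\omega(D)}$ in general.

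The argument is essentially elementary, so there is no serious obstacle; the only point requiring care is the bookkeeping in the odd case, where $D$ is \emph{not} divisible by the smallest prime $2 = p_1$. One must check that the compensating gain --- the smallest prime factor being at least $3$ rather than $2$ --- outweighs the loss of the factor $p_1$, which is exactly the content of the one-line monotonicity inequality $p_2 \cdots p_{n+1} \geq d_n$. Once the fundamental-discriminant classification is invoked to pin down the parity behaviour and the power of $2$, both cases reduce to this comparison.
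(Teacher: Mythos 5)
Your proof is correct and rests on essentially the same idea as the paper's: use the structure of fundamental discriminants to show each prime divisor contributes at least its own size to $|D|$ (with the prime $2$ contributing at least $4$), then compare the $i$-th prime divisor of $D$ termwise with the $i$-th prime $p_i$. The paper does this uniformly in one line via the factorization $D=\prod_{p\mid D}p^*$ with $|2^*|\in\{4,8\}$ and $|p^*|=p$ for odd $p$, whereas your odd/even case split (including the sharper shift $q_i\geq p_{i+1}$ in the odd case) is equivalent bookkeeping of the same fact.
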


\begin{proof} We have that 
$$ D=\prod_{p|D}p^* $$ 
with 
$2^*\in\{-4, -8, 8\}$ and $p^*=(-1)^{\frac{p-1}{2}}p $ for $p\neq 2$, hence 
$$|D|\geq p_1\cdot\ldots\cdot p_{\omega(D)}= d_{\omega(D)}.$$
\end{proof}

\begin{theorem}\label{upp_bound}
Let $r\geq 1$ be an integer. Let $N_{2^r}$ be the smallest of the integers $N$ such that 
$$d_N\geq e^{11.2},$$
$$p_N^{\frac12-\frac{1}{\log d_N}}\geq 2^r,$$
and 
$$\frac{0.655}{\pi\cdot e}\cdot\frac{\sqrt{d_N}}{\log d_N  }\geq 2^{r(N-1)}.$$
Let $D<0$ be a fundamental discriminant with $|D|\geq d_{N_{2^r}}$. If $L(s,\chi)\neq 0$ for $1-\frac{1}{4\log |D| }\leq s<1$ then $E(D)\neq 2^r$. 

Without any assumption on zeros of $L$-functions, there is at most one $D$ with $|D|\geq d_{N_{2^r}}$ and $E(D)=2^r$. 
\end{theorem}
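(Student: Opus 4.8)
The plan is to argue by contradiction: assume $E(D)=2^r$ with $|D|\ge d_{N_{2^r}}$, and to play an upper bound for $h(D)$ coming from the $2$-group structure against a Siegel--Tatuzawa lower bound. Write $N:=N_{2^r}$ and $w:=\omega(D)$. By Lemma~\ref{lem6} the assumption $E(D)=2^r$ forces $h(D)\le 2^{r(w-1)}$, while the hypothesis on the nonvanishing of $L(s,\chi)$ near $s=1$, together with $|D|\ge d_N\ge e^{11.2}$ (the first defining condition of $N$), lets me invoke Lemma~\ref{Siegel} to get $h(D)>\frac{0.655}{\pi e}\cdot\frac{\sqrt{|D|}}{\log|D|}$. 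The whole proof then reduces to a single inequality, which I abbreviate as $G(w)\ge 2^{r(w-1)}$ where $G(w):=\frac{0.655}{\pi e}\cdot\frac{\sqrt{d_w}}{\log d_w}$, to be proved for every integer $w\ge N$.

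Granting that inequality, the contradiction is immediate. If $w\ge N$, Lemma~\ref{lem7} gives $|D|\ge d_w$, and since $x\mapsto \sqrt{x}/\log x$ is increasing for $x>e^{2}$ I may replace $|D|$ by $d_w$ in the Siegel bound to obtain $h(D)>G(w)\ge 2^{r(w-1)}\ge h(D)$. If instead $1\le w<N$, then $2^{r(w-1)}<2^{r(N-1)}$, and using $|D|\ge d_N$ in the Siegel bound gives $h(D)>G(N)\ge 2^{r(N-1)}>2^{r(w-1)}\ge h(D)$. Either way $h(D)>h(D)$, so $E(D)\ne 2^r$. For the unconditional last sentence I replace Lemma~\ref{Siegel} by the Tatuzawa statement Lemma~\ref{Siegel1} (equivalently Lemma~\ref{Siegel_one} with $A=d_N$): the lower bound for $h(D)$ then holds for every $|D|\ge d_N$ with at most one exception, so the same contradiction rules out $E(D)=2^r$ for all but possibly that one $D$.

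It remains to prove $G(w)\ge 2^{r(w-1)}$ for all $w\ge N$, and this is where the remaining two defining conditions of $N$ enter. I would run an induction on $w$. The base case $w=N$ is exactly the third defining inequality $\frac{0.655}{\pi e}\cdot\frac{\sqrt{d_N}}{\log d_N}\ge 2^{r(N-1)}$. For the inductive step, from $d_{w+1}=d_w\,p_{w+1}$ one computes $G(w+1)/G(w)=\sqrt{p_{w+1}}\cdot\frac{\log d_w}{\log d_{w+1}}$, so it suffices to show that this ratio is at least $2^r$ for every $w\ge N$; then $G(w+1)\ge 2^r\,G(w)\ge 2^{r w}$ closes the induction.

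The main obstacle is precisely this last step: showing $\sqrt{p_{w+1}}\cdot\frac{\log d_w}{\log d_{w+1}}\ge 2^r$ for all $w\ge N$. The factor $\frac{\log d_w}{\log d_{w+1}}=1-\frac{\log p_{w+1}}{\log d_{w+1}}$ is exactly what the unusual exponent in the second defining condition $p_N^{1/2-1/\log d_N}\ge 2^r$ is calibrated to absorb. I would argue that the binding case is $w=N$ (as $w$ grows, $\sqrt{p_{w+1}}$ increases while $\frac{\log p_{w+1}}{\log d_{w+1}}$ decreases, since $\log d_{w}=\theta(p_{w})$ grows much faster than $\log p_{w}$), and that at $w=N$ the estimate $\frac{\log d_N}{\log d_{N+1}}\ge \sqrt{p_N/p_{N+1}}\cdot p_N^{-1/\log d_N}$ holds by an elementary bound on the primorial $d_w$, whence $\sqrt{p_{N+1}}\cdot\frac{\log d_N}{\log d_{N+1}}\ge p_N^{1/2-1/\log d_N}\ge 2^r$ by the second condition. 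This reduction of the inductive step to a sharp but elementary inequality among $p_N,p_{N+1},d_N,d_{N+1}$ — which holds with a little room to spare once $d_N\ge e^{11.2}$ — is the delicate part; everything else is bookkeeping with Lemmata~\ref{lem6}, \ref{lem7}, \ref{Siegel}, and \ref{Siegel1}.
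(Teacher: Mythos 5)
Your skeleton is the paper's: the same contradiction between the Lemma~\ref{lem6} upper bound $h(D)\le 2^{r(\omega(D)-1)}$ and the Siegel--Tatuzawa lower bounds (Lemma~\ref{Siegel} conditionally, Lemma~\ref{Siegel_one} with $A=d_N$ unconditionally), mediated by Lemma~\ref{lem7}. But you organize the key inequality genuinely differently. The paper never proves your statement $G(w)\ge 2^{r(w-1)}$ for all $w\ge N$; instead it first derives $\omega(D)>N$ and then bounds $|D|\ge d_{\omega(D)}\ge d_N\, p_N^{\omega(D)-N}$, i.e.\ it replaces every prime beyond the $N$-th by $p_N$. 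This one-shot substitution lets the second defining condition be used only at the single index $N$: in the unconditional half it applies directly, since substituting into the power bound $|D|^{\frac12-\frac{1}{\log d_N}}$ of Lemma~\ref{Siegel_one} produces the factor $p_N^{(\omega(D)-N)(\frac12-\frac{1}{\log d_N})}\ge 2^{r(\omega(D)-N)}$; in the conditional half it enters via the monotonicity of the auxiliary function $x\mapsto \bigl(\sqrt{p_N}/2^r\bigr)^{x-N}/\bigl(\log d_N+(x-N)\log p_N\bigr)$. In particular the paper needs no control whatsoever of $\log d_{w+1}/\log d_w$, which is exactly the quantity your induction must handle.

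Your route does work, but the step you flag is where your write-up is incomplete, and your proposed repair is off target in two respects. First, the reduction ``the binding case is $w=N$'' is unjustified: $\frac{\log p_{w+1}}{\log d_{w+1}}$ need not decrease at every step (across a large prime gap the numerator jumps while the denominator grows only by that same $\log p_{w+2}$), so you cannot simply inherit the case $w=N$. Second, no primorial or Chebyshev estimate is needed at all. The clean fix proves the ratio bound at \emph{every} $w\ge N$ directly: by concavity of $\log$ one has $\log\log d_{w+1}-\log\log d_w\le \frac{\log p_{w+1}}{\log d_w}$, and setting $\delta:=\log p_{w+1}-\log p_w\ge 0$ the inequality $\frac{\delta}{2}+\frac{\log p_w}{\log d_w}\ge \frac{\log p_w+\delta}{\log d_w}$ holds as soon as $\log d_w\ge 2$ (guaranteed by $d_w\ge d_N\ge e^{11.2}$); exponentiating yields $\sqrt{p_{w+1}}\,\frac{\log d_w}{\log d_{w+1}}\ge p_w^{\frac12-\frac{1}{\log d_w}}$. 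Since $w\mapsto p_w^{\frac12-\frac{1}{\log d_w}}$ is nondecreasing, the second defining condition gives this ratio $\ge 2^r$ for all $w\ge N$, closing your induction. One further point you gloss over: in the unconditional half, Lemma~\ref{Siegel_one} does \emph{not} give the $\sqrt{x}/\log x$ shape, so ``the same contradiction'' requires rerunning your induction with $H(x):=\frac{0.655}{\pi\log d_N}\, x^{\frac12-\frac{1}{\log d_N}}$ in place of $G$; there the step ratio is simply $p_{w+1}^{\frac12-\frac{1}{\log d_N}}\ge 2^r$, so that case is in fact easier and essentially collapses back into the paper's argument.
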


\begin{proof} %
Let $N\geq 1$ be an integer with the three properties from the hypothesis.
Assume that $L(s,\chi)\neq 0$ for $1-\frac{1}{4\log |D|}\leq s<1$. 
We apply Lemma \ref{Siegel} and obtain 
$$h(D)>\frac{0.655}{\pi e}\cdot\frac{\sqrt {|D|}}{\log |D| }.$$
Suppose that $E(D)= 2^r$. Lemma \ref{lem6} implies

$$  2^{r(\omega(D)-1)}>\frac{0.655}{\pi e}\cdot\frac{\sqrt{|D|}}{\log |D|}\geq \frac{0.655}{\pi e}\cdot\frac{\sqrt{d_N}}{\log d_N} \geq 2^{r(N-1)}, $$

hence 

$$\omega(D)> N. $$ By Lemma \ref{lem7} we have that   

$$ |D|\geq d_{\omega(D)}\geq  d_N\cdot p_N^{\omega(D)-N}, $$
so
$$ 2^{r(\omega(D)-1)}>\frac{0.655}{\pi e}\cdot\frac{\sqrt{|D|}}{\log|D|}
\geq \frac{0.655}{\pi e}\cdot\frac{\sqrt{d_N}\cdot \sqrt{p_N}^{\omega(D)-N}}
{\log d_N+(\omega(D)-N) \log p_N}=$$
$$=\frac{0.655}{\pi e}\cdot\frac{\sqrt{d_N}\cdot 2^{r(\omega(D)-N)}\cdot  (\frac{ \sqrt{p_N}}{2^r}) ^{\omega(D)-N}}
{\log d_N+(\omega(D)-N) \log p_N}, $$

$$  2^{r(N-1)}>\frac{0.655}{\pi e}\cdot\frac{\sqrt{d_N}\cdot  (\frac{ \sqrt{p_N}}{2^r}) ^{\omega(D)-N}}
{\log d_N+(\omega(D)-N) \log p_N}\geq \frac{0.655}{\pi e}\cdot\frac{\sqrt{d_N}}{\log d_N}, $$
since the hypothesis $p_N^{\frac12-\frac{1}{\log d_N}}\geq 2^r$ implies  $\sqrt{p_N}> 2^r$ so the function 
$$x\mapsto   \frac{(\frac{ \sqrt{p_N}}{2^r}) ^{x-N}}
{\log d_N+(x-N) \log p_N}, x\geq N$$
is increasing. This contradicts the hypothesis 

$$\frac{0.655}{\pi\cdot e}\cdot\frac{\sqrt{d_N}}{\log d_N  }\geq 2^{r(N-1)}.$$ So $E(D)\neq 2^r$.

We make now no assumption on zeros of $L$-functions. We apply Lemma 
\ref{Siegel_one} with $A=d_N$ and obtain  
 $$h(D)> \frac{0.655}{\pi\cdot \log d_N}\cdot |D|^{\frac12-\frac{1}{\log d_N}}$$
for all discriminants $D<0$ such that $|D|\geq d_N$ with at most one exception. Let $D<0$ be a discriminant such that 
$|D|\geq d_N$ and 
$$h(D)> \frac{0.655}{\pi\cdot \log d_N}\cdot |D|^{\frac12-\frac{1}{\log d_N}}.$$
Suppose that $E(D)=2^r$. By Lemma \ref{lem6} and the choice of $N$ we have that 
$$2^{r(\omega(D)-1)}\geq h(D)> \frac{0.655}{\pi\cdot \log d_N}\cdot |D|^{\frac12-\frac{1}{\log d_N}}\geq 2^{r(N-1)} $$
hence 
$$\omega(D)>N.$$
By Lemma \ref{lem7}  we have that  
$$|D| \geq d_{\omega(D)}>  d_N\cdot p_N^{\omega(D)-N}, $$
so
$$2^{r(\omega(D)-1)}\geq h(D)> \frac{0.655}{\pi\cdot \log d_N}\cdot |D|^{\frac12-\frac{1}{\log d_N}}> $$
$$>\frac{0.655}{\pi\cdot \log d_N}\cdot d_N^{\frac12-\frac{1}{\log d_N}}\cdot p_N^{(\omega(D)-N)\cdot (\frac12-\frac{1}{\log d_N})}=$$
$$=\frac{0.655}{\pi}\cdot \frac{ d_N^{\frac12-\frac{1}{\log d_N}}}{\log d_N}\cdot 
2^{r(\omega(D)-N)}\cdot \left(\frac{p_N^{\frac12-\frac{1}{\log d_N}}}{2^r} \right)^{\omega(D)-N} $$
$$\geq \frac{0.655}{\pi}\cdot \frac{ d_N^{\frac12-\frac{1}{\log d_N}}}{\log d_N}\cdot 2^{r(\omega(D)-N)},$$
since $$\left(\frac{p_N^{\frac12-\frac{1}{\log d_N}}}{2^r} \right)^{\omega(D)-N}\geq 1 $$ by the hypothesis $p_N^{\frac12-\frac{1}{\log d_N}}\geq 2^r$. It follows that 
$$2^{r(N-1)} >\frac{0.655}{\pi}\cdot \frac{ d_N^{\frac12-\frac{1}{\log d_N}}}{\log d_N}=
\frac{0.655}{\pi\cdot e}\cdot \frac{\sqrt{d_N}}{\log d_N  },$$
in contradiction with the hypothesis 
$$\frac{0.655}{\pi\cdot e}\cdot\frac{\sqrt{d_N}}{\log d_N  }\geq 2^{r(N-1)}.$$
So $E(D)\neq 2^r$.

\end{proof}

{\bf Example.} For $r=1$ we have $ N_2=11 $ and $d_{N_2}\leq 2.01\cdot 10^{11}$. For $r=2$ we have 
$ N_{4}=24 $ and $d_{N_{4}}\leq 2.38\cdot 10^{34}$. For $r=3$ we have 
$ N_{8}=58 $ and $d_{N_{8}}\leq 3.17\cdot 10^{110}$.

\section{Algorithm using Siegel bounds}

In this section we want to use the estimates from Lemmata \ref{Siegel}
and \ref{Chen} in order to compute all imaginary quadratic
number fields with exponent 4 and 8. When we
use the bounds from Lemma \ref{Siegel}, we potentially miss 
fields such that the corresponding $L$-series has a Siegel zero
close to 1. The estimate from Lemma \ref{Chen} is a little bit
weaker. This leads to more expensive computations, but it has the
advantage that we can prove that we miss at most one field.  These estimates are not valid for fields with
small discriminant. This is not a big problem, since the class groups
of fields with small discriminants are known. E.g. the web-page
\cite{BM}
provides a table of all quadratic fields up to absolute discriminant
$10^7$. We checked the small fields independently by computations in
Magma we do not describe here.

Let $\exponent \in\{4,8\}$ be the exponent we are looking for. Then we split
our problem by looking at discriminants with $k$ different prime
factors. The estimates in Section \ref{theoreticalbounds} show that we
can bound the maximal number of prime factors. In the following we
write our discriminants as a product of $k$ fundamental discriminants
$p^*$. 
For every odd prime
$p$ it holds that $p^*=(-1)^{\frac{p-1}{2}}p$, and   $2^* \in
\{-4,-8,8\}$.

In the following let
$$D=p_1^* \cdots p_k^*, \mbox{ where we assume that }p_1<\cdots<p_k.$$
From genus theory it is well known that the 2-rank of the class group
$\Cl(D)$ is exactly $k-1$. When we assume that the class group is of
exponent $c$, then the maximal possible class group is
$(\Z/c\Z)^{k-1}$ and therefore of order $c^{k-1}$. Using the estimates
of Lemma \ref{Siegel} or \ref{Chen} we can compute an upper
bound for the absolute value of $D$. The basic idea of the algorithm
is to test all $D$ smaller than this bound consisting of exactly $k$
prime factors. Especially, when $k$ is large it is not practical to
list and test all those $D$. Therefore we would like to reduce this
list further. The $4$-rank of class groups of quadratic number fields
is well studied and there are nice formulas to compute it. Furthermore,
it is known that the average $4$-rank is small \cite{FoKl1, Ger}. Assume that
we know the $4$-rank $r_4$ of $\Cl(D)$ and denote by $r_2:=k-1$ the 2-rank.
Then the maximal possible class group of exponent $\exponent$ improves to
$$(\Z/c\Z)^{r_4} \times (\Z/2\Z)^{r_2-r_4} \mbox{ of order }2^{r_2-r_4}c^{r_4}.$$
This gives an improvement by factor $(c/2)^{r_2-r_4}$.

\subsection{Redei matrices}\label{redeimat}
In this section we want to study the $4$-rank of the class group of quadratic
number fields. These things are well known and based on the works of Redei.
In the following we use the Kronecker symbols $\jac{D}{p}$ and corresponding to
$D=p_1^* \cdots p_k^*$ we define the matrix $M=(c_{ij})\in\F_2^k$ via
$$(-1)^{c_{ij}}:=\jac{p_j^*}{p_i} \mbox{ for }1\leq j\ne i\leq k
\mbox{ and }c_{ii}:=\sum_{j=1,j\ne i}^k c_{ij}, 1\leq i \leq k.$$
\begin{theorem}[Redei]\label{redei}
  Let $D=p_1^* \cdots p_k^*$. Then $\rk_4(\Cl(D)) = k-1-\Rank(M).$
\end{theorem}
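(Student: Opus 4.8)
The plan is to reduce the computation of $\rk_4(\Cl(D))$ to a rank computation over $\F_2$ by combining a general fact about finite abelian $2$-groups with R\'edei's explicit construction of unramified quadratic extensions. First I would recall that for any finite abelian $2$-group $A$ one has $\rk_4(A)=\dim_{\F_2}(A^2\cap A[2])$, where $A^2=\{a^2\}$ and $A[2]$ denotes the $2$-torsion. Applied to $A=\Cl(D)$ this turns the problem into deciding which $2$-torsion classes are squares. By genus theory the ambiguous classes $\Cl(D)[2]$ are generated by the classes $[\idp_1],\dots,[\idp_k]$ of the ramified primes, with $\idp_i^2=(p_i^*)$, subject to the single relation $\prod_{i=1}^k[\idp_i]=[(\sqrt D)]=1$. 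Thus $v\mapsto\prod_i[\idp_i]^{v_i}$ induces an isomorphism $\F_2^k/\langle\mathbf 1\rangle\xrightarrow{\sim}\Cl(D)[2]$, where $\mathbf 1=(1,\dots,1)$.

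The heart of the argument is the following equivalence, which I would establish next: for a subset $S\subseteq\{1,\dots,k\}$ with indicator vector $v_S\in\F_2^k$, the ambiguous class $[\mathfrak a_S]$ with $\mathfrak a_S:=\prod_{i\in S}\idp_i$ is a square in $\Cl(D)$ if and only if $Mv_S=0$. To prove this I would write $D=D_1D_2$ with $D_1=\prod_{i\in S}p_i^*$ and $D_2=\prod_{i\notin S}p_i^*$, and invoke R\'edei's criterion: the class is a square precisely when the conic $z^2=D_1x^2+D_2y^2$ has a nontrivial rational point, since a nonzero solution produces an element of $K$ whose ideal exhibits $\mathfrak a_S$ as a square times a principal ideal (equivalently, it yields an unramified quadratic extension of $K$ attached to this class under the Artin map). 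By Hasse--Minkowski the conic is solvable iff it is solvable over $\R$ and over every $\Q_p$. Since $D<0$, exactly one of $D_1,D_2$ is negative, so the ternary form is indefinite and the real place imposes no condition, while the product formula for the Hilbert symbol makes the places $p=2$ and $p\nmid D$ automatic; the remaining local conditions are $\jac{D_1}{p_i}=1$ for $i\notin S$ and $\jac{D_2}{p_i}=1$ for $i\in S$.

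It then remains to match these Kronecker-symbol conditions with the vanishing of $Mv_S$. Using $(-1)^{c_{ij}}=\jac{p_j^*}{p_i}$ together with the diagonal convention $c_{ii}=\sum_{j\ne i}c_{ij}$, a direct computation gives $(Mv_S)_i=\sum_{j\in S}c_{ij}$ for $i\notin S$ and $(Mv_S)_i=\sum_{j\notin S}c_{ij}$ for $i\in S$; exponentiating, these read $\jac{D_1}{p_i}=1$ and $\jac{D_2}{p_i}=1$ respectively, exactly the local conditions found above. Hence $[\mathfrak a_S]$ is a square iff $v_S\in\ker M$. Since $M\mathbf 1=0$ (each row of $M$ sums to zero by the definition of $c_{ii}$) and $\mathbf 1$ corresponds to the trivial class, the surjection $\F_2^k\to\Cl(D)[2]$ restricts to a surjection $\ker M\to\Cl(D)^2\cap\Cl(D)[2]$ with kernel $\langle\mathbf 1\rangle$. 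Therefore $\rk_4(\Cl(D))=\dim_{\F_2}\ker M-1=(k-\Rank(M))-1=k-1-\Rank(M)$. The main obstacle is precisely the key equivalence: rigorously identifying solvability of the conic with the class being a square requires the class field theory / descent step producing the unramified extension, and the local conditions at $2$ and $\infty$ must be handled carefully via the product formula so that they contribute no constraints beyond those already recorded in $M$.
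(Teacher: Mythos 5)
Your overall architecture (reduce $\rk_4$ to $\dim_{\F_2}(A^2\cap A[2])$, use genus theory, then a R\'edei-type criterion to identify which $2$-torsion classes are squares) is the right shape — and since the paper proves nothing here (it only cites R\'edei and Gerth), the attempt must stand on its own. It does not, because the central equivalence you assert, that $[\mathfrak a_S]$ is a square in $\Cl(D)$ iff the conic $z^2=D_1x^2+D_2y^2$ has a nontrivial rational point iff $Mv_S=0$, is false, as is your description of the kernel of $\F_2^k\to\Cl(D)[2]$. Take $D=-20=(-4)\cdot 5$. Here $\idp_5=(\sqrt{-5})$ is principal, so the unique relation among the ramified classes is $[\idp_5]=1$, not $[\idp_2][\idp_5]=1$: when $2^*=-4$ one has $(\sqrt{D/4})=\prod_{i\geq 2}\idp_i$, so the kernel of your surjection is spanned by the indicator vector of the \emph{odd} primes, not by $\mathbf 1$ (the paper's own lemma on row sums already flags $p_1^*=-4$ as the exceptional case). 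Moreover, for $S=\{5\}$ the class $[\idp_5]$ is trivially a square and the conic $z^2=5x^2-4y^2$ has the point $(1,1,1)$, yet $Mv_S\neq 0$ because the row of $p_1=2$ contributes $\jac{5}{2}=-1$. Your product-formula dismissal of the place $2$ is exactly what breaks: for $D_2=-4u$ the Hilbert symbol $(D_1,D_2)_2$ equals $1$ automatically when $D_1\equiv 1 \pmod 4$, while the $2$-row of $M$ imposes the genuine extra condition $\jac{D_1}{2}=1$; so ``conic solvable everywhere'' and ``$Mv_S=0$'' are different conditions. (Conversely, for $S=\{2,5\}$ the conic with $D_2=1$ is trivially solvable while $[\idp_2\idp_5]=[\idp_2]$ is not a square.)

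The deeper error persists even for odd $D$: the conditions you correctly read off from $Mv_S=0$ — $\jac{D_1}{p}=1$ for $p\mid D_2$ and $\jac{D_2}{p}=1$ for $p\mid D_1$, R\'edei's ``decomposition of the second kind'' — characterize the \emph{character} side: they say the genus character $\chi_{D_1}$ vanishes on $\Cl(D)[2]$, equivalently $\chi_{D_1}$ is the square of a character of $\Cl(D)$, equivalently $K(\sqrt{D_1})$ embeds in an unramified cyclic quartic extension (your Artin-map parenthetical attaches the unramified extension to a class, but it belongs to a character/index-two subgroup — this is precisely the confusion). The \emph{class}-side statement $[\mathfrak a_S]\in\Cl(D)^2$ is, by the principal genus theorem, the vanishing at $[\mathfrak a_S]$ of \emph{all} genus characters; evaluating $\chi_E([\idp_i])$, which is $\jac{E}{p_i}$ for $p_i\nmid E$ and $\jac{D/E}{p_i}$ for $p_i\mid E$ (whence the diagonal convention), shows this is equivalent to $M^Tv_S=0$, not $Mv_S=0$. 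Since $\Rank(M)=\Rank(M^T)$ the rank formula is unharmed, but the kernels differ as subspaces, so your pairing of $\ker M$ with the class-side surjection is unjustified: for $D=(-3)(-11)(-83)=-2739$ one computes $c_{12}=c_{13}=c_{23}=0$ and $c_{21}=c_{31}=c_{32}=1$, hence $\ker M=\{v: v_1=v_2\}$ while $\ker M^T=\{v: v_2=v_3\}$; your criterion would declare $[\idp_{83}]$ a square and $[\idp_3]$ not, and both verdicts are wrong. To repair the argument, either stay on the character side — prove ``second kind $\iff$ $\chi_{D_1}$ kills $\Cl[2]$'' and finish with the duality count $\#\{\mbox{genus characters that are squares}\}=\#(\Cl^2\cap\Cl[2])=2^{\rk_4}$, so that $2^{k-\Rank(M)}=|\ker M|=2^{\rk_4+1}$ — or keep your class-side plan but run it through the principal genus theorem and $M^T$, with a separate treatment of the relation vector and the $2$-row when $2^*=-4$. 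Your opening reduction and your final dimension count are fine; it is the middle equivalence that must be replaced.
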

See \cite{Red} 10.b, \cite{Ger} (2.7).

The relation $c_{ii}:=\sum_{j=1,j\ne i}^k c_{ij}$ for $1\leq i \leq k$
shows that the last column of $M$ is the sum of the first $k-1$
columns and therefore dependent from the first $k-1$
columns. Furthermore we see by the multiplicativity of the Kronecker
symbol that $$(-1)^{c_{ii}} = \jac{\prod\limits_{j=1,j\ne i}^k p_j^*}{p_i}.$$
In order to understand this matrix, the following lemma is helpful. It deals
also with the prime 2 except when $2^*=-4$.
\begin{lemma}\label{pq}
  Let $p_i$ and $p_j$ different prime numbers such $p_i^*\ne -4 \ne p_j^*$. Then
  \begin{equation}\label{pqe} 
    \jac{p_i^*}{p_j}\jac{p_j^*}{p_i}=\begin{cases}1 & p_i^*>0 \mbox{ or }p_j^*>0\\ -1 & p_i^*<0 \mbox{ and }p_j^*<0. \end{cases}
  \end{equation}
\end{lemma}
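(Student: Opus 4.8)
The plan is to reduce everything to the quadratic reciprocity law together with its two supplementary laws, exploiting the fact that for an odd prime $p$ one has $p^* = \epsilon_p\, p$ with $\epsilon_p := (-1)^{(p-1)/2}$, so that $p^* \equiv 1 \pmod 4$ and $p^* > 0$ if and only if $p \equiv 1 \pmod 4$, i.e. $\epsilon_p = 1$. Since the hypothesis excludes $p^* = -4$, the only even case remaining is $p = 2$ with $2^* \in \{8, -8\}$. First I would split into the case where both $p_i, p_j$ are odd and the case where one of them equals $2$.

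In the odd-odd case I would expand $\jac{p_i^*}{p_j} = \jac{\epsilon_i}{p_j}\jac{p_i}{p_j}$ and likewise for the other factor. The first supplement $\jac{-1}{p} = (-1)^{(p-1)/2}$ gives $\jac{\epsilon_i}{p_j} = (-1)^{\frac{p_i-1}{2}\frac{p_j-1}{2}}$ and symmetrically $\jac{\epsilon_j}{p_i} = (-1)^{\frac{p_j-1}{2}\frac{p_i-1}{2}}$, so the two sign-contributions cancel and the product collapses to $\jac{p_i}{p_j}\jac{p_j}{p_i}$. Quadratic reciprocity evaluates this as $(-1)^{\frac{p_i-1}{2}\frac{p_j-1}{2}}$, which equals $-1$ precisely when $p_i \equiv p_j \equiv 3 \pmod 4$, that is exactly when $\epsilon_i = \epsilon_j = -1$, i.e. $p_i^* < 0$ and $p_j^* < 0$; otherwise it is $+1$. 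This is exactly the claimed dichotomy.

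For the case $p_j = 2$ I would use the Kronecker symbol at $2$, namely $\jac{a}{2} = \pm 1$ according to whether $a \equiv \pm 1$ or $a \equiv \pm 3 \pmod 8$, together with the second supplement $\jac{2}{p_i} = (-1)^{(p_i^2-1)/8}$. The crucial sub-claim is that $\jac{p_i^*}{2} = \jac{2}{p_i}$, which I would verify by running through the four residues of $p_i \bmod 8$ and substituting $p_i^* = \epsilon_i\, p_i$. Granting this, for $2^* = 8$ one has $\jac{2^*}{p_i} = \jac{2}{p_i}$, so the product is $\jac{p_i^*}{2}\jac{2}{p_i} = 1$, consistent with $2^* > 0$. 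For $2^* = -8$ one has $\jac{2^*}{p_i} = \jac{-1}{p_i}\jac{2}{p_i}$, so the product picks up the extra factor $\jac{-1}{p_i} = (-1)^{(p_i-1)/2}$, which is $-1$ exactly when $p_i \equiv 3 \pmod 4$, i.e. $p_i^* < 0$, matching the case $p_i^* < 0$ and $2^* < 0$.

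The conceptually easy part is the odd-odd case, where the star-signs cancel and reciprocity does all the work. I expect the main obstacle to be the bookkeeping in the case $p = 2$: one must respect the Kronecker symbol convention at the prime $2$ and track residues modulo $8$, and it is precisely here that excluding $2^* = -4$ matters, since for the discriminant $-4$ the clean identity $\jac{p_i^*}{2} = \jac{2}{p_i}$ and the resulting reciprocity would break down. This final verification is a short finite case analysis rather than a genuine difficulty.
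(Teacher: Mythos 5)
Your proposal is correct and follows exactly the route the paper intends: the paper's proof is the single remark that the statement is ``straightforward from the reciprocity law,'' and your argument simply fills in those details --- cancellation of the supplementary signs $\jac{\epsilon_i}{p_j}\jac{\epsilon_j}{p_i}$ in the odd--odd case so that reciprocity yields $(-1)^{\frac{p_i-1}{2}\frac{p_j-1}{2}}$, plus the finite mod-$8$ check giving $\jac{p_i^*}{2}=\jac{2}{p_i}$ for $2^*\in\{8,-8\}$. All steps check out, including the correct observation that excluding $2^*=-4$ is what makes the $p=2$ bookkeeping work.
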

The proof is straigthforward from the reciprocity law.
From this we see that if we have two different primes congruent to 
3 modulo 4 dividing $D$ then we get an entry 1
in our matrix. Furthermore the matrix is not symmetric.

In our algorithm we compute a lot of Redei matrices and the
corresponding ranks. In order to simplify these computations we look
when the last row is the sum of the first $k-1$ rows.
\begin{lemma}
  Let $D=p_1^*\cdots p_k^* <0$ be a product of fundamental
  discriminants with $p_1<p_2<\ldots<p_k$ and denote by $M$ the Redei
  matrix and by the vector $(d_1,\ldots,d_k)$ the sum of the rows of $M$.  

  Assume that $p_1^*\ne -4$. Then the vector $(d_1,\ldots,d_k)$ is zero.
  In the case $p_1^*=-4$ we define $\tilde{D} :=  -D / 4$ and we get
   $$\left((-1)^{d_1},\ldots,(-1)^{d_k} \right)
    = \left(\jac{2}{\tilde{D}},\jac{2}{p_2},\ldots,\jac{2}{p_k}\right)\, .$$
  \end{lemma}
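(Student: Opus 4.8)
The plan is to reinterpret the vector $(d_1,\ldots,d_k)$ as the vector of column sums of $M$ and to evaluate each entry through the symmetric products governed by Lemma~\ref{pq}. First I would record the reformulation: since $c_{jj}=\sum_{l\neq j}c_{jl}$ by the definition of the diagonal, the $j$-th column sum satisfies
$$d_j=\sum_{i=1}^k c_{ij}=c_{jj}+\sum_{i\neq j}c_{ij}=\sum_{i\neq j}\bigl(c_{ij}+c_{ji}\bigr),$$
and therefore
$$(-1)^{d_j}=\prod_{i\neq j}(-1)^{c_{ij}+c_{ji}}=\prod_{i\neq j}\jac{p_j^*}{p_i}\jac{p_i^*}{p_j}.$$
Each factor on the right is exactly the quantity evaluated in Lemma~\ref{pq} whenever neither prime equals $2$ with $2^*=-4$; this identity is the backbone of the whole argument.

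In the case $p_1^*\neq -4$ no $p_i^*$ equals $-4$, so Lemma~\ref{pq} applies to every pair and each factor is $-1$ precisely when $p_i^*<0$ and $p_j^*<0$, and $+1$ otherwise. Consequently $d_j=0$ if $p_j^*>0$, while $d_j\equiv \#\{i\neq j:p_i^*<0\}\pmod 2$ if $p_j^*<0$. Setting $m:=\#\{i:p_i^*<0\}$, the hypothesis $D=\prod_i p_i^*<0$ forces $m$ to be odd, so $m-1$ is even and $d_j\equiv 0$ in every case. Hence $(d_1,\ldots,d_k)$ is the zero vector.

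The case $p_1^*=-4$, i.e.\ $p_1=2$, requires more care, since the pairs involving the index $1$ fall outside Lemma~\ref{pq}. Here I would use the two elementary facts that, for odd $p_i$, the symbol $\jac{-4}{p_i}=\jac{-1}{p_i}$ equals the sign of $p_i^*$, and that $\jac{p_i^*}{2}=\jac{2}{p_i}$ (the supplementary law for the Kronecker symbol with denominator $2$, valid for both signs of $p_i^*$ because $\jac{-1}{2}=1$). I would also use that $\tilde D=-D/4=\prod_{i\geq 2}p_i^*$ is positive, which forces $\#\{i\geq 2:p_i^*<0\}$ to be even and, by multiplicativity, $\tilde D=p_2\cdots p_k$ as a positive integer. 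For $j\geq 2$ the odd-odd factors contribute $(-1)^{\#\{i\geq 2,\,i\neq j:\,p_i^*<0\}}$ and the index-$1$ factor contributes $\jac{-4}{p_j}\jac{p_j^*}{2}$; a short parity count, in which the sign of $\jac{-4}{p_j}$ is absorbed by the evenness of $\#\{i\geq 2:p_i^*<0\}$, collapses the product to $\jac{2}{p_j}$. For $j=1$ all surviving factors are $\jac{-4}{p_i}\jac{p_i^*}{2}$ over $i\geq 2$, whose product is $\prod_{i\geq 2}\jac{2}{p_i}=\jac{2}{\tilde D}$ because the accumulated signs multiply to the sign of $\tilde D$, namely $+1$.

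The first case is essentially immediate once the column-sum reformulation is in hand. The genuine obstacle is the prime-$2$ bookkeeping in the second case: one must handle the Kronecker symbol $\jac{\cdot}{2}$ for possibly negative arguments, verify $\jac{p_i^*}{2}=\jac{2}{p_i}$ in both sign cases, and track how the parity of the number of negative odd $p_i^*$—pinned down by $\tilde D>0$—cancels against the factor $\jac{-4}{p_j}$ so that the product degenerates to precisely the stated Kronecker symbols.
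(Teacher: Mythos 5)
Your proof is correct, and its skeleton matches the paper's exactly in the first half: both arguments rewrite the $j$-th column sum as $d_j=\sum_{i\neq j}(c_{ij}+c_{ji})$ (the paper writes the diagonal term as $\jac{\prod_{i\neq j}p_i^*}{p_j}$, which is the same thing by multiplicativity, see \eqref{eq1}), evaluate the symmetrized products $\jac{p_j^*}{p_i}\jac{p_i^*}{p_j}$ via Lemma~\ref{pq}, and settle the case $p_1^*\neq -4$ by the parity count that $D<0$ forces an odd number of negative $p_i^*$. Where you genuinely diverge is the case $p_1^*=-4$. The paper handles it by a comparison trick: it multiplies the product \eqref{eq1} by $\jac{2}{p_j}$ for $j\neq 1$ (resp.\ by $\prod_{i\geq 2}\jac{2}{p_i}$ for $j=1$), which, via $\jac{-8}{p}=\jac{-4}{p}\jac{2}{p}$, converts every symbol involving $-4$ into the corresponding symbol for $2^*=-8$; since $-8$ is a negative prime discriminant covered by Lemma~\ref{pq}, the twisted product equals $1$ by the already-proved first case, and the stated values follow. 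You instead evaluate the index-$1$ factors head-on, using $\jac{-4}{p_j}=\jac{-1}{p_j}=\mathrm{sign}(p_j^*)$ and the supplementary law $\jac{p_j^*}{2}=\jac{2}{p_j}$ (correctly checked for both signs of $p_j^*$), and cancel the accumulated signs against the evenness of $\#\{i\geq 2:\, p_i^*<0\}$ pinned down by $\tilde{D}>0$. The paper's reduction is shorter and avoids explicit supplementary-law bookkeeping, at the cost of leaving the twisting identity implicit; your direct computation makes transparent exactly where the symbols $\jac{2}{p_j}$ and $\jac{2}{\tilde{D}}$ arise and verifies each coordinate independently. Both arguments are complete and correct.
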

\begin{proof}
The sum of the
$j$-th column is 0 if and only if the product of the corresponding symbols in the exponents are 1.
We get for this product:
\begin{equation}\label{eq1}
\prod_{i=1,i\ne j}^k \jac{p_j^*}{p_i} \cdot \jac{\prod\limits_{i=1,i\ne j}^k p_i^*}{p_j}
=\prod_{i=1,i\ne j}^k \jac{p_j^*}{p_i}\jac{p_i^*}{p_j} .
\end{equation}
Let us assume that $p_1^*\ne -4$. Therefore we are able to apply Lemma \ref{pq}. 
If $p_j^*>0$ then all factors are 1.
If $p_j^*<0$ then the number of negative $p_i^*\ne p_j^*$ is even and therefore the product is 1.

It remains to study the case $p_1^*=-4$. If we multiply the product \eqref{eq1} with 
$\prod_{i=2}^k\jac{2}{p_i}$ if $j=1$ and with $\jac{2}{p_j}$ for $j\ne 1$ we get 
the product in the situation that $2^*=-8$ and we know that this product
is 1. Therefore the product is like expected.
\end{proof}
Note that in the case $p_1^*=-4$ we only get row sum 0, if all odd prime divisors are 
congruent to $\pm 1 \bmod 8$.

Using Theorem \ref{redei} we know that $\rk_4(\Cl(D)) = k-1-\Rank(M).$ Therefore we are interested to get
good lower bounds for the rank of the Redei matrix $M$. Denote by $t$ the number of negative $p_i^*$. Then
we get $\Rank(M) \geq (t-1)/2$, see \cite{Sue}. This paper also discusses the cases where this bound is sharp.

\subsection{Using the Redei matrix} 

In this section we describe an algorithm to compute all fields with exponent $\exponent =2^r$, where we focus on the
cases $c=4,8$. In order to use the Redei matrices we restrict to discriminants with
exactly $k$ prime factors. Using Lemma \ref{Chen} we can give an upper bound for the number of prime factors
for fields of exponent $2^r$ which is valid for all fields with at most one exception. In the example after
Theorem \ref{upp_bound} we get 11, 24, and 58 for the exponents 2,4,8, respectively. We remark that these bounds
can be improved, but the following algorithm is very efficient for the cases close to the bound. For a given exponent
$\exponent =2^r$ we call the following algorithm for all $k\geq 1$ up to the computed upper bound.

For the algorithm we have to make the decision if we want to use the
lower bound of Lemma \ref{Siegel} or of Lemma \ref{Chen}. The first lemma has the advantage that the bound is better
and therefore the computation will be faster. In this case we only compute all wanted fields which have no
Siegel zero. If we assume that no Siegel zeros exist, then this computation is complete. The bound of the second lemma
is weaker, but it has the advantage that we miss at most one field (if it exists, it has a Siegel zero). In our range
for exponent 8, this bound is about a factor 2 weaker than the first bound. In order to simplify the presentation we
only give the description of the algorithm using the bound of Lemma \ref{Chen}. 

The main algorithm to call is Algorithm \ref{algo}. This algorithm computes the global variables $B_0,\ldots,B_k$ which will
be used in Algorithms \ref{Check} and \ref{NextTuple}. The main idea of the following algorithms is that the knowledge of
a factor of $D$ gives some partial information on the Redei matrix. This information can be used to give an upper bound
on the $4$-rank of the class group which then gives improved bounds on the maximal possible discriminant. In theory we expect
that the average $4$-rank is close to 1~\cite{Ger} and therefore the upper bound $c^{k-1}$ for the class number of a field with exponent $\exponent$
is quite pessimistic. The described approach improves the upper bound of the class number, and therefore the maximal possible
discriminant, when the lower rank bound of the Redei matrix increases, and therefore the possible $4$-rank decreases.

The goal of the first algorithm is to give a quick check, if the
fundamental discriminant $D=p_1^*\cdots p_k^*$ has an exponent which
is a divisor of $\exponent$. The correctness of the algorithm is obvious by
using Lemma \ref{lem1}. If possible, we try to avoid the actual
computation of the class group.

\Algo{Check}{Check$(\exponent,p_1^*,\ldots,p_k^*)$}
{Exponent $\exponent =2^r$, prime fundamental discriminants $p_1^*,\ldots,p_k^*$.}
{Return true, iff $D=p_1^*\cdots p_k^*<0$ and the exponent of the class group of $\Q(\sqrt{D})$ is a divisor of $\exponent$.}
{
\item[Step 1] If $D:=p_1^*\cdots p_k^*>0$ then return false.
\item[Step 2] Compute the smallest prime $q$ which is split in $K:=\Q(\sqrt{D})$.
\item[Step 3] If $q^\exponent<|D|/4$ then return false (see Lemma \ref{lem1}).
\item[Step 4] If the $c$-th power of a prime ideal above $q$ is not principal, then return false. This test can be done
  most efficiently  by using binary quadratic forms.
\item[Step 5] Repeat the test of Step 4 with the 2nd smallest splitting prime.
\item[Step 6] Compute the rank $s$ of the Redei matrix of $D$. If $|D| > B_s$ then return false ($B_s$ is a global variable computed in Algorithm \ref{algo}).
\item[Step 7] Compute the class group of $K$. If the exponent divides $\exponent$ then return true, otherwise return false. 
}

  Let $D=p_1^*\cdots p_k^*$ be a negative fundamental discriminant and assume that $p_1^*,\ldots,p_\ell^*$ are known to us.
  Let $M$ be the Redei matrix of $D$ defined in Section \ref{redeimat}. Denote by $N$ the minor defined by the first
  $\ell$ rows and first $\ell$ columns. Trivially, we get that $\Rank(N) \leq \Rank(M)$. Since $p_1^*,\ldots,p_\ell^*$ are known to us,
  we can compute all entries of $N$ except the diagonal. The following algorithm tests all possible combinations for the diagonal
  and therefore computes a lower bound for the rank of $N$ and $M$.

\Algo{lowerredei}{LowerRedeiBound$(p_1^*,\ldots,p_\ell^*)$}
{Prime fundamental discriminants $p_1^*,\ldots,p_\ell^*$ with $\ell<k$.}  
{Returns a lower bound for the rank of the Redei matrix of all $D$ where $D$ has exactly
  $k$ prime factors and $p_1^*\cdots p_\ell^* \mid D$.}
{
\item[Step 1] For all $1\leq i\ne j \leq \ell$ compute $c_{ij}$ via $(-1)^{c_{ij}}:=\jac{p_j^*}{p_i}$.
\item[Step 2] For all $(a_1,\ldots,a_\ell)\in \F_2^\ell$ compute the rank of the matrix $D=(d_{ij})$,
  where $d_{ij}=c_{ij}$ for $i\ne j$ and $d_{ii}=a_i$.
\item[Step 3] Return the minimal rank computed in Step 2.
  }

  In the following algorithm we denote by $P[i]$ the $i$-th prime number. We update the lower bound of the rank of the Redei matrix
  and append the next fundamental prime discriminant. 
  This function will call itself recursively. 
  The number $m$ is the index of the smallest prime that can be used next. 
  
\Algo{NextTuple}{NextTuple(m,Discs,k,c)} {Number $m$ of the next prime to use, 
  list {\rm Discs}$= [p_1^*,\ldots,p_\ell^*]$ of fundamental
  prime discs, $k$, exponent $\exponent =2^r$}
{List of all discriminants $D$ with $k$ prime factors such
  that $p_1^*\cdots p_\ell^* \mid D$ (with at most one exception)}
{ \item[Step 1] $s:={\rm LowerRedeiBound}(p_1^*,\ldots,p_\ell^*)$.
     \item[Step 2] $B:=B_s$, {\rm res}:=[] (empty list), ${\rm bound}:=B/(|p_1^*\cdots p_\ell^*|)$, 
                   $i:=m$, $C:=P[i]\cdots P[i+k-\ell-1]$.
     \item[Step 3] While $C \leq {\rm bound}$ do
        \begin{enumerate}
        \item $p_{\ell+1}^* := (-1)^{(P[i]-1)/2} P[i]$.
        \item If $k=\ell+1$ then call Check$(c,p_1^*,\ldots,p_k^*)$
          and append $D=p_1^*\cdots p_k^*$ to {\rm res}, if the check is successful.
        \item If $k>\ell+1$ then call NextTuple(i,$\{p_1^*,\ldots,p_{\ell+1}^*\}$,k,c) and append the computed $D$`s to {\rm res}.
        \item $i:=i+1$, $C:=P[i]\cdots P[i+k-\ell-1]$.
        \end{enumerate}
      \item [Step 4] Return {\rm res}.
}

In the main algorithm we compute the global variables $B_0,\ldots,B_{k-1}$ and we split the computation into
four parts, depending on the behavior at 2.

\Algo{algo}{Computation of fields with exponent $2^r$}{Exponent $2^r$, number of prime factors $k$}{All fields with
  exponent $\exponent =2^r$ with at most one exception}{
\item[Step 1] By numerical approximation compute a bound $B_\ell$ for all $0\leq \ell \leq k-1$ such that
  for all $|D|>B_\ell$ we get that the minimum of \eqref{b_ch} is greater than $2^\ell \cdot c^{k-1-\ell}$.
\item[Step 2]  Call {\rm res1:=NextTuple}(2,\{1\},k,c).
\item[Step 3]  Call {\rm res2:=NextTuple}(2,\{-4\},k,c).
\item[Step 4]  Call {\rm res3:=NextTuple}(2,\{-8\},k,c).
\item[Step 5]  Call {\rm res4:=NextTuple}(2,\{8\},k,c).
\item[Step 6] Return the discriminants from {\rm res1, res2, res3, res4}.
  }

  We remark that there are obvious improvements in the implementation which we have not described here for simplification. 
  Note
  that there might be one missing example for the overall algorithm by using Lemma \ref{Chen}. 
  The reason is that we compute all fields with exponent dividing $\exponent$ assuming the bounds 
  of Lemma \ref{Chen}. Therefore missing examples have the property that the bounds
  of Lemma \ref{Chen} are wrong and this can happen at most one time.

  The algorithm described above only computes discriminants $D$ such that $|D|>10^6$. We could easily reduce this lower bound,
  but this lower bound is not an issue since there are known tables of class groups 
  for all quadratic fields of small discriminant. The web-page~\cite{BM}
  gives all fields up to $10^7$.

  The overall running time of our algorithm on one core is about 17
  hours when we use the estimates from Lemma \ref{Siegel_one}. The
  cases $k\in\{30,\ldots,58\}$ take about 50 seconds.  Only for small
  $k$ we need to compute class groups. The cases $k\leq 6$ take about
  half an hour. The most expensive cases are $k\in\{8,\ldots,12\}$
  which take more one hour each, the worst case being $k=10$ which
  takes almost 3 hours.

  When we use the estiamtes from Lemma \ref{Chen} which are sufficient to prove
  that we miss at most one example, then the running time is about 60 hours.
  In case we can take a bound which is a factor 2 better than the bound in Lemma
  \ref{Siegel_one}, the running time improves to less than 3 hours.

\section{Direct searching for small discriminants}

\begin{lemma}
For a prime $p$ and an exponent $\exponent$ there are less than $2p^{\exponent/2}$ imaginary quadratic fields 
$K =\Q(\sqrt{D})$, such that
$p$ splits in $K$ and the ideal above $p$ has order divisible by $\exponent$ in the class group.
\end{lemma}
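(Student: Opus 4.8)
The plan is to reduce the statement to an elementary counting of integer traces. The hypothesis is that $p$ is split in $K=\Q(\sqrt D)$ and that the prime $\idp$ above $p$ has order dividing $\exponent$ in $\Cl(D)$; the latter is exactly the condition that $\idp^{\exponent}$ is principal, say $\idp^{\exponent}=(\alpha)$ with $\alpha\in\OO_K$. Because $p$ is split the prime $\idp$ has degree one, so $\No(\idp)=p$ and, taking ideal norms, $\No(\alpha)=\No(\idp^{\exponent})=p^{\exponent}$. Setting $t:=\alpha+\overline{\alpha}\in\Z$, the integer $\alpha$ is a root of $T^2-tT+p^{\exponent}$. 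Since $K$ is imaginary quadratic, $\alpha$ is non-real, so this quadratic has negative discriminant: $t^2-4p^{\exponent}<0$, that is $|t|<2p^{\exponent/2}$.

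I would then record two facts and conclude by injectivity. First, $t\neq0$: if $t=0$ then $\alpha=-\overline{\alpha}$, hence $(\alpha)=(\overline{\alpha})$ and therefore $\idp^{\exponent}=\overline{\idp}^{\exponent}$, which is impossible because $p$ is split, so $\idp\neq\overline{\idp}$ and unique factorisation of ideals forbids this equality. Second, the field is recovered from $|t|$ alone: from $\alpha=\tfrac12\bigl(t\pm\sqrt{t^{2}-4p^{\exponent}}\bigr)$ one gets $K=\Q(\alpha)=\Q\bigl(\sqrt{t^{2}-4p^{\exponent}}\bigr)$, which depends only on $t^{2}$. Now attach to each field $K$ in the list one generator $\alpha_K$ of $\idp^{\exponent}$ and put $s_K:=|t_K|$, an integer with $0<s_K<2p^{\exponent/2}$. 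If $s_K=s_{K'}$ then $t_K^{2}=t_{K'}^{2}$, and the second fact gives $K=K'$; hence $K\mapsto s_K$ is injective. Consequently the number of fields is at most the number of integers $s$ with $0<s<2p^{\exponent/2}$, namely $\lceil 2p^{\exponent/2}\rceil-1<2p^{\exponent/2}$, as claimed. Note that it is precisely the exclusion of $s=0$ that makes the bound strict in the case (with $\exponent$ even) where $2p^{\exponent/2}$ is an integer.

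The one point that needs care, and which I expect to be the main obstacle, is that $\alpha$ is not canonical: it is determined only up to a unit of $\OO_K$ and up to interchanging $\idp$ with $\overline{\idp}$ (i.e.\ replacing $\alpha$ by $\overline{\alpha}$), and for a different choice the trace $t$ may change. I would avoid claiming that $t$ is an invariant of $K$; the argument only needs that every admissible $\alpha$ reconstructs the same field $\Q(\sqrt{t^{2}-4p^{\exponent}})$, so choosing any single generator $\alpha_K$ per field is enough for the injection $K\mapsto s_K$. For $D<-4$ the unit group is $\{\pm1\}$ and this subtlety is vacuous, so the two remaining fields $D\in\{-3,-4\}$ (if $p$ splits there at all) can simply be inspected directly. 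Everything else is the estimate $t^{2}<4p^{\exponent}$ together with the degree-one identities $\No(\idp)=p$ and $\No((\alpha))=\No(\alpha)$.
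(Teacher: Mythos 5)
Your proposal is correct and is essentially the paper's own argument in different notation: writing a generator of $\idp^{\exponent}$ as $\alpha=(x+y\sqrt{D})/2$, your trace $t$ is the paper's $x$ in the norm equation $4p^{\exponent}=x^{2}+|D|y^{2}$, your reconstruction $K=\Q(\sqrt{t^{2}-4p^{\exponent}})$ is its $K=\Q(\sqrt{-(4p^{\exponent}-x^{2})})$, and your exclusion of $t=0$ is its assertion $x\cdot y\neq 0$, which you usefully justify via $\idp^{\exponent}=\overline{\idp}^{\exponent}\Rightarrow\idp=\overline{\idp}$. The only micro-point is that ``$K$ imaginary quadratic, hence $\alpha$ non-real'' should also exclude $\alpha\in\Q$, but the very same conjugation argument disposes of that case, since $\alpha=\overline{\alpha}$ would again force $\idp^{\exponent}=\overline{\idp}^{\exponent}$.
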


\begin{proof}
The assumption implies that the equation $4 p^\exponent = x^2 + |D| y^2$ has an integer solution 
with $x \cdot y \not= 0$. 
Thus, the field $K$ is one of $\Q(\sqrt{-(4 p^\exponent - x^2)})$ for $x \in \Z$ and $4 p^\exponent - x^2 > 0$. 
This shows $1 \leq x < 2 p^{\exponent/2}$. Thus, there are less than $2p^{\exponent/2}$ fields.
\end{proof}

\begin{remarks}
\begin{enumerate}
\item
The lemma above results in an algorithm to enumerate all imaginary quadratic 
fields such that $p$ is a split prime and the exponent of the class group is 
a divisor of $\exponent$. It's complexity is $O(p^{\exponent/2})$.
More precisely, we first compute a finite list of fields that is a superset of the fields we are searching. 
The superfluous fields can easily be removed in a second step. 
\item
As a slight variation, we can enumerate all quadratic fields such that $p$ is 
the smallest split prime and the exponent of the class group is a divisor of $\exponent$. 
For this we just have to sift out all those fields that have a smaller
split prime. Note that this can be done without factoring $D y^2 = 4 p^\exponent - x^2$.
\item
A C-implementation of this approach lists all the fields with smallest split prime $\leq 197$ 
and $\exponent=8$ in less than 4 minutes. We find 268 fields with exponent 1,2,4 and 778 fields with exponent 8. 
The largest one is $\Q(\sqrt{-430950520})$.
\item
Doing the same computation with exponent 3, 5, 6, 7 takes less than a minute. 
%
\item
An imaginary quadratic field with class group exponent 
$\leq 8$ not listed has smallest split prime $> 193$.
\item
Searching for imaginary quadratic fields with $|D|$ up to a given bound and smallest split 
prime $> 193$ can be done by a multiply-focused enumeration similar to~\cite{WW}.
I.e. we have to sift out all those $D$ that have a small split prime.
\end{enumerate}
\end{remarks}

\begin{remarks}
We want to search for fields with no split prime $p \leq 193$ and $|D| < 3 \cdot 10^{20}$.
This can be done by sieving. To maximize the speed we have to use bit-level operations 
and tables of pre-computed data. Further, non negative integers $< 2^{64}$ have the 
fastest arithmetic. Thus, the main loop should be restricted to this.

This results in the following approach:
\begin{itemize}
\item
We want to search for all imaginary quadratic fields $\Q(\sqrt{D})$ with smallest split prime $> 193$.
This implies $\left(\frac{D}{p} \right) \not= 1$ for all primes $p \leq 193$. 
\item
Using the chinese remainder theorem with the module 
$m = 16 \cdot 3 \cdot 5 \cdots 47 \approx 4.9 \cdot 10^{18}$
the number of possible  residue classes for $|D|$ is
$220172127436800 \approx 2.2 \cdot 10^{14}$.
\item
Let $r$ be the smallest non-negative representative of a feasible residue class mod $m$. 
We have to test the fields with $|D| = r, r + m,r + 2m,\ldots, r + 63 m$ in parallel.
\item
The prime 53 rules out a field if $\left(\frac{D}{53} \right) = 1$. 
For $|D| = r, r + m,r + 2m,\ldots, r + 63 m$, we can encode this in a sequence of 64 bits.
The $k$-th bit is 1 iff the field for $|D| = r + k m$ is not ruled out by 
$\left(\frac{-r-km}{53} \right) = 1$.

We tabulate these bit-sequences for each residue class of $r  \bmod 53$.
\item
Similarly each other prime $p \leq 193$ and each residue class $r \bmod p$ 
we get a sequence of 64 bits. The $k$-th bis is 1 iff the field for 
$|D| = r + k m$ is not ruled out by $\left(\frac{-r- km}{p} \right) = 1$.

This gives us further tables of bit-sequences. One table for each prime $p$ with 
one entry of 64 bits for each residue class of $r \bmod p$.
\item
To combine the information modulo the various primes, we have to pick those $|D|$, 
that are not ruled out by one of the primes up to 193. 
In the language of bit sequences this means that we have to do logical {\tt and} of the sequences.
\item
If a result bit of the {\tt and} is 0, the corresponding field is ruled out.
\item
If a result bit is $1$, the field $\Q(\sqrt{-r - km})$ needs a more detailed inspection. 
\end{itemize}
\end{remarks}

This approach leads to the following algorithm.

\Algo{multi_sieve}{Multifocused bit-vector sieve}
{No input.}  
{Print all imaginary quadratic fields $\Q(\sqrt{D})$ without smallest split prime $p > 193$,
$|D| < 3.1 \cdot 10^{20}$ and $4p^8 > |D|$.}
{
\item[Step 1] Set the modules $m_1 = 3 \cdot 5 \cdot 7 \cdot 11 \cdot 13 \cdot 17 = 255255$, 
$m_2 = 19 \cdot 23 \cdot 29 \cdot 31 = 392863$, 
$m_3 = 37 \cdot 41 \cdot 43 \cdot 47  = 3065857$, and 
$m = 16 m_1 m_2 m_3 \approx 4.9 \cdot 10^{18}$.
\item[Step 2] For each module $m_i$ compute a list of the integers $r$ in $\{0..m_i-1\}$ 
such that $\left(\frac{-r}{p}\right) \not= 1$
for all primes $p$ dividing $m_i$.
\item[Step 3] For each prime $p \leq 193$ not dividing $m$ set up a list $(l_p[0],\ldots,l_p[p-1])$ 
of $p$ bit-vectors of length 64. 
The $k$-th bit in $l_p[i]$ is $0$ if and only if $\left(\frac{-(i + km)}{p} \right) = 1$.
\item[Step 4] In a quadruple loop run over the cartesian product of the 3 lists computed in step 2 and 
$\{3, 4, 8, 11 \bmod 16 \}$ and do the following:
\begin{enumerate}
\item[a)] 
Use the chinese remainder theorem to find the unique integer $0 \leq r < m$ congruent to the given
residues modulo $m_1, m_2, m_3, 16$. 
\item[b)] Do a logical {\tt and} of the bit-vectors $l_p[r \bmod p]$
for all prime $p$ with $53 \leq p \leq 193$.
\item[c)] If the $k$-th bit of the resulting bit-vector is $1$, 
the field $\Q(\sqrt{D})$ with $D:= -r - k \cdot m$ is suspicious.
\item[d)] Compute the smallest split prime $p$ for each suspicious field $\Q(\sqrt{D})$.
If $4p^8 > |D|$ then print the field.
\end{enumerate}
}
 
\begin{remark}
The algorithm above finds all the imaginary quadratic fields $\Q(\sqrt{D})$ with  $|D| < 3.14 \cdot 10^{20}$,
no split-prime $\leq 193$. We print out only those fields that may have a class group exponent $\leq 8$.
The run time on a single core on an Intel i5 processor is about 40 days. However, we can the loop over the 
cartesian product in parallel. 
The result is as follows:
\begin{itemize}
\item
The algorithm above results in 1002279 imaginary quadratic fields.
\item
None of them has a class group of exponent $\leq 100$.

\end{itemize}
\end{remark}


 
\end{document}